\documentclass[a4paper,11pt]{amsart}

\usepackage{amsfonts,latexsym,rawfonts,amsmath,amssymb,amsthm, mathrsfs, lscape}
\usepackage[all]{xy}
\usepackage[english]{babel}
\usepackage[utf8]{inputenc}
\usepackage{xfrac}
\usepackage{multirow}
\usepackage{comment}

\usepackage{array, tabularx}

\usepackage{setspace}
\setstretch{1.01}

\usepackage{textcomp}

\usepackage{spverbatim}

\usepackage{amsthm}
\usepackage{tikz}
\usetikzlibrary{cd}

\usepackage[hypertexnames=false,
backref=page,
    pdftex,
    pdfpagemode=UseNone,
    breaklinks=true,
    extension=pdf,
    colorlinks=true,
    linkcolor=blue,
    citecolor=blue,
    urlcolor=blue,
]{hyperref}

\usepackage[top=1.5in, bottom=1in, left=0.9in, right=0.9in]{geometry}

\renewcommand{\Re}{\mathsf{Re}\,}
\renewcommand{\Im}{\mathsf{Im}\,}

\allowdisplaybreaks[1]

\newtheorem{theorem}{Theorem}
\newtheorem{corollary}[theorem]{Corollary}
\newtheorem{lemma}[theorem]{Lemma}

\newtheorem{proposition}[theorem]{Proposition}
\newtheorem{remark}[theorem]{Remark}

\newcommand{\referenza}{}
\newtheorem*{theorem*}{Theorem \referenza}
\newtheorem*{corollary*}{Corollary \referenza}
\newtheorem*{proposition*}{Proposition \referenza}

\makeatletter
\@namedef{subjclassname@2020}{\textup{2020} Mathematics Subject Classification}
\makeatother

\title{On metric and cohomological properties of Oeljeklaus-Toma manifolds}

\author{Daniele Angella}
\author{Art\={u}ras Dubickas}
\author{Alexandra Otiman}
\author{Jonas Stelzig}

\address[Daniele Angella]{Dipartimento di Matematica e Informatica ``Ulisse Dini''\\
Universit\`a degli Studi di Firenze,
viale Morgagni 67/a, IT-50134 Firenze, Italy}
\email{daniele.angella@unifi.it}
\email{daniele.angella@gmail.com}

\address[Art\={u}ras Dubickas]{Institute of Mathematics\\
Department of Mathematics and Informatics\\
Vilnius University\\
Naugarduko 24, LT-03225 Vilnius, Lithuania}
\email{arturas.dubickas@mif.vu.lt}

\address[Alexandra Otiman]{Dipartimento di Matematica e Informatica ``Ulisse Dini''\\
Universit\`a degli Studi di Firenze,
viale Morgagni 67/a, IT-50134 Firenze, Italy and Institute of Mathematics “Simion Stoilow” of the Romanian Academy, Calea Grivitei Street 21, 010702, Bucharest, Romania
}
\email{alexandraiulia.otiman@unifi.it}
\email{alexandra.otiman@gmail.com}

\address[Jonas Stelzig]{Mathematisches Institut der Ludwig-Maximilians-Universit\"at M\"unchen,
Theresienstraße 39, DE-80993 M\"unchen}
\email{jonas.stelzig@math.lmu.de}

\subjclass[2020]{53C55, 57T15}
\keywords{Oeljeklaus-Toma manifold, Hermitian metric, pluriclosed, SKT, cohomology, Bott-Chern cohomology}
\thanks{
The first-named author is supported by project PRIN2017 ``Real and Complex Manifolds: Topology, Geometry and holomorphic dynamics'' (code 2017JZ2SW5), and by GNSAGA of INdAM. The third-named author is supported by GNSAGA of INdAM and by a grant of Ministry of Research and Innovation, CNCS - UEFISCDI, project no. PN-III-P4-ID-PCE-2020-0025, within PNCDI III.
}

\newcommand{\Id}{\operatorname{Id}}
\newcommand{\del}{\partial}
\newcommand{\delbar}{\bar\partial}
\newcommand{\mult}{\operatorname{mult}}
\newcommand{\gr}{\operatorname{gr}}

\newcommand{\img}[2][1]{\begin{gathered}\includegraphics[scale=#1]{#2}\end{gathered}}

\begin{document}

\begin{abstract}
We study metric and cohomological properties of Oeljeklaus-Toma manifolds. In particular, we describe the structure of the double complex of differential forms and its Bott-Chern cohomology and we characterize the existence of pluriclosed (aka SKT) metrics in number-theoretic and cohomological terms. Moreover, we prove they do not admit any Hermitian metric $\omega$ such that $\partial \overline{\partial} \omega^k=0$, for $2 \leq k \leq n-2$ and we give explicit formulas for the Dolbeault cohomology of Oeljeklaus-Toma manifolds admitting pluriclosed metrics.
\end{abstract}

\maketitle

\section*{Introduction}
The class of Oeljeklaus-Toma manifolds \cite{OT} consists of complex non-K\"ahler manifolds including Inoue-Bombieri surfaces \cite{bombieri, inoue, tricerri} of type $S_M$. They were constructed by number field techniques in \cite{OT}, where they are presented as counterexamples to a conjecture by Vaisman concerning locally conformally K\"ahler metrics.
Because of their very construction, some of their geometric properties are encoded in the algebraic structure and can be investigated by number theoretic techniques, see {\itshape e.g.} \cite{OT, v, dub, dv, bra, ot21, io19, o20, apv16, tomassini-torelli, sverbitsky-curves, sverbitsky-surfaces, k13, fkv15}.

\medskip

In this paper, we proceed with the study of metric and cohomological properties of Oeljeklaus-Toma manifolds, after \cite{OT, dub, fkv15, o20, io19, k20, ot21}.
In what follows, $X(K, U)= \mathbb{H}^s\times\mathbb{C}^t \slash \mathcal{O}_K \rtimes U$ will denote an Oeljeklaus-Toma manifold of complex dimension $n=s+t$, associated with a number field $K\simeq\mathbb Q[X]/(f)$, where $f$ has $s\geq1$ real roots and $2t\geq2$ complex roots, and to the admissible subgroup $U$ of rank $s$ of totally positive units, and $\mathcal{O}_K$ denotes the ring of rank $s+2t$ of algebraic integers, see Section \ref{subsec:ot} for details on the construction.

\medskip

We study the existence of ``special'' Hermitian metrics, that is, Hermitian metrics whose associated $(1,1)$-form solve some PDE equation. Some results are already known by \cite{OT, dub, fkv15, o20}. Here, we focus in particular on {\em pluriclosed metrics} (aka SKT, Strong K\"ahler with Torsion), namely, Hermitian metrics such that the associated $(1,1)$-form satisfies $\partial\overline\partial\omega=0$. They play a fundamental role in the Hermitian geometry of complex manifolds with applications to String Theory, see {\itshape e.g.} \cite{bismut, fino-tomassini, ggp, streets-tian, cavalcanti, ivanov-papadopoulos}.
By \cite{o20}, it is known that the existence of pluriclosed metrics is equivalent to the number theoretic properties that $s\leq t$ and, for any $u\in U$, it holds $\sigma_j(u)|\sigma_{s+j}(u)|^2=1$ for any $j\in\{1,\ldots,s\}$, and $|\sigma_{s+j}(u)|^2=1$ for any $j\in\{s+1,\ldots,t\}$, where $\sigma_1, \ldots, \sigma_s\colon K \to \mathbb R$ denote the real embeddings and $\sigma_{s+1}, \ldots,\sigma_{s+t}, \sigma_{s+t+1}=\bar\sigma_{s+1}, \ldots, \sigma_{s+2t}=\bar \sigma_{s+t} \colon K \to \mathbb C$ denote the complex embeddings induced by the roots of $f$.
In Theorem \ref{unitthm}, by number theoretic arguments, we prove that these conditions cannot hold unless $s=t$. We then get
\renewcommand{\referenza}{\ref{rem:dubickas}}
\begin{corollary*}
Let $X(K, U)$ be an Oeljeklaus-Toma manifold of type $(s, t)$.
It admits pluriclosed metrics if and only if $s = t$ and, for any $u\in U$,
$$\sigma_j(u)|\sigma_{s+j}(u)|^2=1 , \quad \text{ for any } j\in\{1,\ldots,s\} .$$
\end{corollary*}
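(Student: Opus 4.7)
My plan is to deduce the corollary as an immediate logical consequence of Theorem~\ref{unitthm} combined with the pluriclosed criterion of \cite{o20} recalled just above the statement. That criterion says that $X(K,U)$ is pluriclosed if and only if three conditions hold simultaneously: (i) $s\leq t$; (ii) $\sigma_j(u)|\sigma_{s+j}(u)|^2=1$ for every $u\in U$ and every $j\in\{1,\ldots,s\}$; (iii) $|\sigma_{s+j}(u)|^2=1$ for every $u\in U$ and every $j\in\{s+1,\ldots,t\}$. Theorem~\ref{unitthm} furnishes precisely the missing piece: an admissible subgroup $U$ of rank $s$ in $\mathcal O_K^{\ast,+}$ satisfying both (ii) and (iii) can exist only when $s=t$.

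For the necessity direction I would start from a pluriclosed metric, invoke \cite{o20} to obtain (i)--(iii), and then apply Theorem~\ref{unitthm} to conclude $s=t$. When $s=t$ the index range in (iii) is empty, so (iii) is vacuous, and only (ii), which is the condition stated in the corollary, remains. For the sufficiency direction one assumes $s=t$ and (ii); then (iii) is vacuous and (i) is trivially satisfied, so all three hypotheses of \cite{o20} are met and a pluriclosed metric exists.

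Since Theorem~\ref{unitthm} is available, no further difficulty arises at this stage; the genuine work is entirely contained in that theorem. There, the main obstacle is ruling out $s<t$: one must show that an admissible rank-$s$ subgroup $U\subset\mathcal O_K^{\ast,+}$ cannot satisfy $|\sigma_{s+j}(u)|=1$ for all $u\in U$ and all $j>s$ while also satisfying $\sigma_j(u)|\sigma_{s+j}(u)|^2=1$ for $j\leq s$. A natural attack is to pass to logarithms: these constraints carve out an $s$-dimensional subspace of $\mathbb R^{s+t}$ into which the Dirichlet log-embedding of $U$ would have to land, after which a rigidity argument of Kronecker type, exploiting that the values $\sigma_{s+j}(u)$ for $j>s$ would then be algebraic numbers on the unit circle and that admissibility forces the first $s$ real log-coordinates to span a full lattice in $\mathbb R^s$, should produce the desired contradiction.
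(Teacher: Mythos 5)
Your proposal is correct and matches the paper's own (implicit) argument: the paper derives Corollary~\ref{rem:dubickas} as an ``immediate corollary'' of Theorem~\ref{unitthm} combined with the criterion of \cite{o20}, exactly as you do, with the observation that condition (iii) becomes vacuous when $s=t$. The concluding sketch of how Theorem~\ref{unitthm} itself might be proved is not needed for this statement and is looser than the paper's actual number-theoretic argument, but since you correctly treat that theorem as given, the proof of the corollary stands.
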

While the above statement can be read as a number theoretic characterization of the existence of pluriclosed metrics, we will provide also a cohomological characterization in Theorem \ref{thm:cohom-skt}.

We also study {\em astheno-K\"ahler metrics}, namely, Hermitian metrics whose associated $(1,1)$-form satisfies $\partial\overline\partial\omega^{n-2}=0$. They appeared in \cite{jost-yau}, see {\itshape e.g.} \cite{fino-tomassini-2} and the references therein. More generally, Hermitian metrics with $\partial\overline\partial \omega^k=0$, for $k\in\{1,\ldots, n-2\}$, see also \cite{fu-wang-wu}, play a role in pluripotential theory on compact Hermitian manifolds, see \cite{dinew}.
In Corollary \ref{cor:ak}, we show that Oeljeklaus-Toma manifolds never admit astheno-K\"ahler metrics, as a consequence of a more general result on the inexistence of Hermitian metrics such that $\partial\overline\partial\omega^k=0$, see Proposition \ref{prop:ddcomegak}.
In Proposition \ref{prop:strongly-gauduchon}, we also show that Oeljeklaus-Toma manifolds never admit {\em strongly Gauduchon} metrics in the sense of \cite{pop13}.

In Table \ref{table:metric}, we summarize what is known until now concerning special metrics on Oeljeklaus-Toma manifolds. (The conditions are intended after possibly relabeling the embeddings.)

\begin{center}
\begin{table}[h!]
\renewcommand{\arraystretch}{1.4}
{\resizebox{\textwidth}{!}{
\begin{tabular}{cc|c|c|}
\multicolumn{2}{c|}{\bfseries metric} & {\bfseries conditions} & {\bfseries ref.}\\
\multicolumn{2}{c|}{\small ($\omega$ positive $(1,1)$-form; $\Omega$ non-degenerate $2$-form)} && \\
\hline\hline
K\"ahler & $d\omega=0$ & never & \cite{OT} \\
\hline
taming symplectic & $d\Omega=0$, $\Omega(\_,J\_)>0$ & never & \cite{fkv15} \\
\hline
balanced & $d\omega^{n-1}=0$ & never & \cite{o20} \\
\hline
\multirow{2}{*}{pluriclosed} & \multirow{2}{*}{$\partial\overline\partial\omega=0$} & $s=t$ and $\sigma_j(u)|\sigma_{s+j}(u)|^2=1$ & \multirow{2}{*}{\cite{o20}, Cor. \ref{rem:dubickas}} \\
&&($\forall u \in U, \forall j \in\{1,\ldots,s\}$) &\\
\hline
``special'' $k$-Gauduchon & \multirow{2}{*}{$\partial\overline\partial\omega^k=0$} & \multirow{2}{*}{never} & \multirow{2}{*}{Prop. \ref{prop:ddcomegak}} \\
($2\leq k \leq n-2$, $n\geq4$) &&&\\
\hline
astheno-K\"ahler & \multirow{2}{*}{$\partial\overline\partial\omega^{n-2}=0$} & \multirow{2}{*}{never} & \multirow{2}{*}{Cor. \ref{cor:ak}} \\
($n\geq4$) &&& \\
\hline
Gauduchon & $\partial\overline\partial\omega^{n-1}=0$ & always & \cite{gauduchon-cras1977} \\
\hline
strongly Gauduchon & $\partial\omega^{n-1}$ is $\overline\partial$-exact & never & Prop. \ref{prop:strongly-gauduchon} \\
\hline
\multirow{3}{*}{LCK} & \multirow{3}{*}{$d\omega=\theta\wedge\omega$, $d\theta=0$} & $|\sigma_{s+1}(u)|=\cdots=|\sigma_{s+t}(u)|$ \quad ($\forall u \in U$) & \multirow{3}{*}{\cite{OT, v, dub, dv}} \\
\cline{3-3}
&& when $t=1$: always &\\
&& when $t\geq2$: never &\\
\hline
LCB & $d\omega^{n-1}=\theta\wedge\omega^{n-1}$, $d\theta=0$ & always & \cite{o20}\\
\hline\hline
\end{tabular}
}}\medskip
\caption{Metric properties of Oeljeklaus-Toma manifolds.}
\label{table:metric}
\end{table}
\end{center}

The cohomological properties of Oeljeklaus-Toma manifolds are studied in \cite{OT, tomassini-torelli, io19, k20, ot21}. In particular, it is known that the first Betti number equals the number $s$ of real places \cite{OT} and that the second Betti number equals ${s \choose 2}$ \cite{OT, apv16}. More generally, the Betti numbers \cite{io19} and the Hodge numbers \cite{ot21} can be computed in terms of the non-trivial relations between the representations of $U$ induced by the embeddings, see Theorem \ref{thm:ot-cohom} for a precise statement.
Here, we completely describe the {\em $E_1$-isomorphism type} \cite{s21} of the double complex of differential forms on Oeljeklaus-Toma manifolds, which in turn allows one to compute the dimensions of any cohomological invariant, including the Bott-Chern cohomology, the Aeppli cohomology, the Varouchas groups, any higher-page analogues, and the groups appearing in the Schweitzer complex.

More precisely, we recall that any bounded double complex $A$ over a field $K$ (in particular, the double complex $A_X$ of complex differential forms on a complex manifold $X$) can be decomposed as a direct sum of indecomposable double complexes that are of type {\em square}, {\itshape i.e.}
$$
\begin{tikzcd}
	K \ar[r, "\simeq"] & K\\
	K \ar[u, "\simeq"] \ar[r, "\simeq"] & K \ar[u, "\simeq"]
\end{tikzcd}
$$
or type {\em zigzag} (with length equal to the number of non-zero components), {\itshape i.e.}
$$
\begin{tikzcd}
	K
\end{tikzcd},
\qquad
\begin{tikzcd}
	K \ar[r, "\simeq"] & K
\end{tikzcd},
\qquad
\begin{tikzcd}
	K \\ K \ar[u, "\simeq"]
\end{tikzcd},
\qquad
\begin{tikzcd}
	K & \\
	K \ar[u, "\simeq"] \ar[r, "\simeq"] & K
\end{tikzcd},
\qquad
\begin{tikzcd}
	K \ar[r, "\simeq"] & K \\
	& K \ar[u, "\simeq"]
\end{tikzcd},
\qquad \dots .
$$
See \cite[Theorem A]{s21} and also \cite{kq20}.
One notices that squares do not contribute to any cohomology, even length zigzags correspond to differentials in the Fr\"olicher spectral sequences, while odd length zigzags correspond to de Rham classes, and the dimension of Bott-Chern and Aeppli cohomologies equals the number of top-right, respectively bottom-left, corners in the zigzags, see \cite{s21}. In particular, Betti numbers, Hodge numbers, Bott-Chern and Aeppli numbers are linear combinations of the multiplicities $\mathrm{mult}_{Z}(A)$ for zigzags $Z$, see \cite[Corollary B]{s21}.
We also recall that an {\em $E_1$-isomorphism} is a morphism between bounded double complexes inducing isomorphisms between the column (`Dolbeault') cohomologies of source and target and between the row (`conjugate Dolbeault') cohomologies of source and target, see \cite[Definition D]{s21}. By \cite[Proposition E]{s21}, the $E_1$-isomorphism type of a bounded double complex is completely described by the multiplicities of all non-projective indecomposable bicomplexes ({\itshape i.e.} all `zigzags').

For an Oeljeklaus-Toma manifold $X$, we will define a graded subalgebra $VB = \bigoplus_r V^rB \subset A_X$ of the algebra of complex differential forms, see Equations \eqref{eq:V} and \eqref{eq:VB}, such that the inclusion is an $E_1$-isomorphism, see Lemma \ref{lem: inclusion VB-AX quiso}. Motivated by the above discussion, and thanks to \cite{ot21, k20}, we obtain the following description of the double complex of an Oeljeklaus-Toma manifold:

\renewcommand{\referenza}{\ref{thm: E1-isotype OT mfds}}
\begin{theorem*}
Let $X$ be an Oeljeklaus-Toma manifold.
For any even length zigzag $Z$, one has
\[
\mult_{[Z]}(A_X)=0.
\]
For odd length zigzags of shape $S_d^{p,q}$, one has
\[
\operatorname{mult}_{S_d^{p,q}}(A_X)=
\begin{cases}
0 & \text{ if } p\neq q \\
\operatorname{mult}_{S_d^{p,p}}(V^pB) = h^{p,d-p}_{\overline\partial}(V^pB)=\sum_{q_1+q_2=d-p}{s\choose q_1}\rho_{p,q_2} & \text{ if } p=q .
\end{cases}
\]
\end{theorem*}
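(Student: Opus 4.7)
The overall plan is to reduce the computation to the subcomplex $VB$ via the $E_1$-isomorphism furnished by Lemma \ref{lem: inclusion VB-AX quiso}, and then exploit the fact that $V^pB$ lives in a single column of the double complex to identify all indecomposable summands explicitly.

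First, I would invoke Lemma \ref{lem: inclusion VB-AX quiso} so that the inclusion $VB\hookrightarrow A_X$ is an $E_1$-isomorphism. By [s21, Proposition E], $E_1$-isomorphic bounded double complexes have the same multiplicities of every non-projective indecomposable (squares need not match, but they contribute to no cohomology and do not appear among zigzags). Hence $\mult_{[Z]}(A_X)=\mult_{[Z]}(VB)$ for every zigzag $Z$, and the problem reduces to decomposing $VB$. From the definitions \eqref{eq:V} and \eqref{eq:VB}, the grading piece $V^pB$ is concentrated in bidegrees $(p,\ast)$, so $VB=\bigoplus_p V^pB$ is a direct-sum decomposition by column. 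This immediately implies $\mult_{S_d^{p,q}}(VB)=0$ whenever $p\neq q$ (a zigzag in bidegree $(p,q)$ with $p\neq q$ has no room to sit inside one column), yielding the off-diagonal vanishing.

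Next, since $V^pB$ is concentrated in the single column $p$, the Dolbeault differential $\partial$ acts as zero on it, and the only non-trivial differential is $\overline\partial$. The indecomposable bicomplexes that can occur in a single column are dots (length $1$) and $\overline\partial$-bars (length $2$). To rule out the bars, one checks that $\overline\partial$ vanishes on every generator of $V^pB$: the generators come from $\partial$-closed $U$-invariant $(1,0)$-forms pulled back from $\mathbb{H}^s$ (which are in fact $d$-closed, so a fortiori $\overline\partial$-closed) combined with elements of $B$ which are $\overline\partial$-closed by definition. Thus $\overline\partial\equiv 0$ on $V^pB$, and $V^pB$ decomposes as a direct sum of length-$1$ dots. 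This simultaneously kills all even-length zigzags and produces the equality
\[
\mult_{S_d^{p,p}}(V^pB)=\dim (V^pB)^{p,d-p}=h^{p,d-p}_{\overline\partial}(V^pB).
\]

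Finally, the dimension formula follows from the tensor-product structure of $V^pB$. The factor from $V^p$ (of column degree $p$) has its $(0,q_2)$-piece controlled by the $U$-invariant part of the Dolbeault cohomology of the $\mathbb{C}^t$-factor twisted by the characters coming from the $(p,0)$ generator; these dimensions are exactly the numbers $\rho_{p,q_2}$ computed in \cite{ot21,k20}. The factor from $B$ contributes the $U$-invariant $(0,q_1)$-forms built from the $s$ real directions, which produce $\binom{s}{q_1}$ linearly independent generators. Summing over $q_1+q_2=d-p$ yields the stated formula. The main obstacle is the structural verification in the previous paragraph that $V^pB$ genuinely lives in one column and carries trivial $\overline\partial$: once that is established, matching $\mult_{S_d^{p,p}}(V^pB)$ with the already-known Dolbeault numbers of \cite{ot21,k20} is a direct combinatorial counting, and the $E_1$-isomorphism then transports all multiplicities back to $A_X$.
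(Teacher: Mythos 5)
Your reduction to $VB$ via Lemma \ref{lem: inclusion VB-AX quiso} and \cite[Proposition E]{s21} is the right first step and agrees with the paper. However, the structural claim on which everything else in your argument rests is false: $V^pB$ is \emph{not} concentrated in the single column of bidegrees $(p,\ast)$, and $\overline\partial$ does \emph{not} vanish on it. By definition, $V^r$ is spanned by the forms $\exp(\Psi_{JKL})\gamma^K\wedge\bar\gamma^L$ with $|J|+|K|=r$, so an element of $V^r$ has holomorphic degree $|K|$, which is in general strictly smaller than $r$; moreover $V^rB=V^r\wedge B$ with $B=\Lambda\langle\omega^k,\bar\omega^k\rangle$ contributing arbitrary additional bidegrees. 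Already $B\subseteq V^0B$ spreads over all bidegrees $(a,b)$ with $a,b\leq s$, and the structure equation $d\omega^k=\frac{\sqrt{-1}}{2}\omega^k\wedge\bar\omega^k$ gives $\overline\partial\omega^k\neq0$, so $V^0B$ is certainly not a direct sum of length-one dots. (If it were, $b_1(X)$ would be at least $2s$, contradicting $b_1=s$.) Consequently your derivations of the off-diagonal vanishing, of the absence of even-length zigzags, and of the identity $\mult_{S_d^{p,p}}(V^pB)=h^{p,d-p}_{\overline\partial}(V^pB)$ all collapse as written.

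What is true --- and what the paper actually uses --- is the much weaker statement that the Dolbeault \emph{cohomology} of $V^rB$ is concentrated in the single column $p=r$ (Lemma \ref{lem: Dolbeault dim OT}); you have conflated the complex with its cohomology. The paper's argument runs as follows: the vanishing of even-length zigzags is equivalent to degeneration of the Fr\"olicher spectral sequence at $E_1$, a nontrivial input from \cite{ot21} that your proposal never invokes. For odd zigzags, $\mult_{[S_d^{p,q}]}(V^rB)$ equals the refined Betti number $\gr^p_F\gr^q_{\bar F}H^d_{dR}(V^rB)$; since the Dolbeault cohomology of $V^rB$ lives in one column, the Hodge filtration on $H^d_{dR}(V^rB)$ has a single breakpoint at $p=r$, and stability of $V^rB$ under conjugation (Lemma \ref{lem: properties of VB}) gives the same for $\bar F$, forcing $p=q=r$ with value $h^{r,d-r}_{\overline\partial}(V^rB)$. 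The final combinatorial formula from \cite{ot21, k20} is then as you state. To repair your write-up you would need to replace the single-column, zero-differential picture by this filtration argument on cohomology.
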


As explained above, Theorem \ref{thm: E1-isotype OT mfds} allows one to compute any cohomological invariant of $X$, by only computing how this invariant evaluates on odd length zigzags. In particular, we compute the Bott-Chern cohomology:

\renewcommand{\referenza}{\ref{cor:bc-ot}}
\begin{corollary*}
Let $X(K, U)$ be an Oeljeklaus-Toma manifold of type $(s, t)$.
The Bott-Chern numbers of $X$ are given by:
\[
h_{BC}^{p,q}(X)=\sum_{r} h_{BC}^{p,q}(V^rB), \quad
\text{ where } \>\>
	h_{BC}^{p,q}(V^rB)=
\begin{cases}
	h_{\bar\partial}^{r, p+q-r } &\text{ if } r\geq p,r\geq q,\\
	h_{\bar\partial}^{r, p+q-r-1} &\text{ if } r<p,r<q,\\
	0&\text{ otherwise.}
\end{cases}
\]
\end{corollary*}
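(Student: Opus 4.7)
The plan is to combine Theorem \ref{thm: E1-isotype OT mfds} with the general principle of \cite{s21} that any cohomological invariant of a bounded double complex which is stable under $E_1$-isomorphisms (Bott--Chern cohomology being such an invariant by \cite[Corollary B]{s21}) decomposes as a sum over the indecomposable zigzag summands weighted by their multiplicities. Since Theorem \ref{thm: E1-isotype OT mfds} asserts that only odd-length zigzags of shape $S_d^{r,r}$ occur in $A_X$, with multiplicity $h_{\bar\partial}^{r,d-r}(V^rB)$, one immediately obtains
\[
h_{BC}^{p,q}(X) \;=\; \sum_{r,d} h_{\bar\partial}^{r,d-r}(V^rB)\cdot h_{BC}^{p,q}(S_d^{r,r}).
\]
Regrouping the sum and setting $h_{BC}^{p,q}(V^rB):=\sum_d h_{\bar\partial}^{r,d-r}(V^rB)\cdot h_{BC}^{p,q}(S_d^{r,r})$ yields the first identity $h_{BC}^{p,q}(X)=\sum_r h_{BC}^{p,q}(V^rB)$ of the corollary.

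It then remains to evaluate $h_{BC}^{p,q}(S_d^{r,r})$. From the general theory in \cite{s21}, the Bott--Chern number at bidegree $(p,q)$ of a single odd-length indecomposable zigzag is $1$ if $(p,q)$ is a top-right corner and $0$ otherwise. The shape $S_d^{r,r}$ is the odd staircase anchored at $(r,r)$, and a direct inspection shows that its top-right corners lie either on a ``lower-left'' branch, consisting of bidegrees $(p,q)$ with $p+q=d$, $p\le r$, $q\le r$, or on an ``upper-right'' branch, consisting of bidegrees $(p,q)$ with $p+q=d+1$, $p>r$, $q>r$. For fixed $(p,q)$ there is then at most one value of $d$ that can contribute: $d=p+q$ in the lower-left regime (producing $h_{\bar\partial}^{r,p+q-r}(V^rB)$), $d=p+q-1$ in the upper-right regime (producing $h_{\bar\partial}^{r,p+q-r-1}(V^rB)$), and no contribution at all if exactly one of $p,q$ exceeds $r$. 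Reassembling these three cases gives the explicit formula for $h_{BC}^{p,q}(V^rB)$ stated in the corollary.

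The main obstacle is the combinatorial identification in the second step: one must match the abstract normal form of the indecomposable zigzag $S_d^{r,r}$ of \cite{s21} with its explicit staircase in the bidegree plane, identify the two branches meeting at the anchor $(r,r)$, and verify that their top-right corners exactly populate the two ranges $\max(p,q)\le r$ and $\min(p,q)>r$ with the appropriate $d$-shift, accounting for the discrepancy between the second Dolbeault index $p+q-r$ in the first case and $p+q-r-1$ in the second. Once this combinatorial picture is in place, the computation is linear and the result follows by summation over $r$.
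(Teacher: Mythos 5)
Your proposal is correct and follows essentially the same route as the paper: decompose $A_X$ up to $E_1$-isomorphism into the zigzags recorded in Theorem \ref{thm: E1-isotype OT mfds}, use that Bott--Chern numbers count top-right corners of zigzags, and evaluate $h_{BC}^{p,q}(S_d^{r,r})$ via the dichotomy $2r\geq d$ (corners on the antidiagonal $a+b=d$ with $a,b\leq r$) versus $2r<d$ (corners on $a+b=d+1$ with $a,b>r$), which is exactly the pair of formulas the paper quotes from \cite{s21}. The only cosmetic difference is that you define $h_{BC}^{p,q}(V^rB)$ as the resulting sum over $d$, whereas the paper obtains it directly from $A_X\simeq_1\bigoplus_r V^rB$; the computation is identical.
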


Finally, in Section \ref{subsec:cohom-skt}, we focus on Oeljeklaus-Toma manifolds admitting pluriclosed metrics and study their cohomological properties. In particular, we prove that the Dolbeault cohomology is invariant with respect to the solvmanifold structure of Oeljeklaus-Toma manifolds (see Remark \ref{rem:inv-pluriclosed} for more details) and we explicitly compute the Betti and Hodge numbers:

\renewcommand{\referenza}{\ref{cohomology}}
\begin{theorem*}
Let $X(K, U)$ be an Oeljeklaus-Toma manifold admitting a pluriclosed metric. Then the Betti and Hodge numbers are:
$$b_\ell=\sum_{k \leq s} {s \choose \ell-3k} \cdot {s \choose k} ,
\quad\quad
h_{\overline\partial}^{p, q}=  \left\{
    \begin{array}{ll}
      {s \choose q-\frac{p}{2}} \cdot {s \choose \frac{p}{2}},  & \mbox{if } p \, \,  \mbox{even}\\
        0, & \mbox{otherwise.}
    \end{array} \right. $$
\end{theorem*}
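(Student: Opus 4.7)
The plan is to combine Theorem \ref{thm: E1-isotype OT mfds}, which expresses the Dolbeault Hodge numbers of $X$ as
\[
h^{p,q}_{\bar\partial}(X) = h^{p,q}_{\bar\partial}(V^pB) = \sum_{q_1+q_2=q}\binom{s}{q_1}\,\rho_{p,q_2},
\]
with Corollary \ref{rem:dubickas}, which forces $s = t$ and the triviality on $U$ of the product character $\sigma_j\cdot\sigma_{s+j}\cdot\overline{\sigma_{s+j}}$ for each $j \in \{1, \ldots, s\}$. The whole task thus reduces to identifying the multiplicities $\rho_{p, q_2}$ under these constraints.

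First I would interpret $\rho_{p,q_2}$ as the multiplicity of the trivial $U$-representation in the appropriate torus $(p, q_2)$-form space, following the description underlying Theorem \ref{thm:ot-cohom} (cf.\ \cite{ot21,k20}). Under the pluriclosed character relations, the only new $U$-invariants arise by matching each torus pair $dw_j \wedge d\bar w_j$ with a compensating hyperbolic contribution dictated by the relation $\sigma_j|\sigma_{s+j}|^2 = 1$. A careful bookkeeping would then show that $\rho_{p, q_2}$ vanishes unless $p = 2k$ is even and $q_2 = k$, in which case $\rho_{2k, k} = \binom{s}{k}$ counts the subsets $J \subset \{1, \ldots, s\}$ of size $k$ indexing the coupling. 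Substituting into the formula, only the term $q_2 = k$ (with $q_1 = q - k$) survives, yielding
\[
h^{2k, q}_{\bar\partial}(X) = \binom{s}{q - k}\binom{s}{k},
\]
which is the claimed Hodge formula with $p = 2k$; the vanishing for $p$ odd follows at once from $\rho_{p,q_2} = 0$.

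The Betti numbers then follow by the Fr\"olicher collapse $E_1 = E_\infty$, which is automatic here since Theorem \ref{thm: E1-isotype OT mfds} asserts the vanishing of multiplicities for all even-length zigzags. Antidiagonal summation with $p = 2k$ and $q = \ell - 2k$ then gives
\[
b_\ell = \sum_k h^{2k, \ell-2k}_{\bar\partial}(X) = \sum_{k \leq s}\binom{s}{\ell - 3k}\binom{s}{k},
\]
as stated. The main obstacle I foresee is the rigorous number-theoretic claim that the pluriclosed relations $\sigma_j|\sigma_{s+j}|^2 = 1$ generate \emph{all} multiplicative relations among the characters $\sigma_1, \ldots, \sigma_s, \sigma_{s+1}, \ldots, \sigma_{s+t}, \overline{\sigma_{s+1}}, \ldots, \overline{\sigma_{s+t}}$ on $U$; this is a statement in the spirit of Theorem \ref{unitthm} and is essential for the clean form of $\rho_{p, q_2}$ to hold, after which the rest of the argument is bookkeeping on the zigzag multiplicity formula.
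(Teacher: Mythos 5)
Your reduction is the same one the paper uses: both arguments feed the formulas $b_\ell=\sum_{\ell'+m=\ell}\binom{s}{\ell'}\rho_m$ and $h^{p,q}_{\overline\partial}=\sum_{\ell+m=q}\binom{s}{\ell}\rho_{p,m}$ (from \cite{io19,ot21,k20}) with the claim that, under the pluriclosed relations, the only non-trivial multiplicative relations among the characters are products of the $s$ triples $\sigma_j\sigma_{s+j}\overline{\sigma}_{s+j}$, so that $\rho_m=0$ unless $m=3k$ with $\rho_{3k}=\binom{s}{k}$, and $\rho_{p,m}=0$ unless $(p,m)=(2k,k)$ with $\rho_{2k,k}=\binom{s}{k}$. (Your derivation of $b_\ell$ from the Hodge numbers via Fr\"olicher degeneration is a harmless variant of the paper's direct use of the de Rham formula.)

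The genuine gap is that you never prove the exhaustiveness claim; you explicitly defer it as ``the main obstacle,'' and you point to the wrong tool for closing it. It does not need anything in the spirit of Theorem \ref{unitthm} (which is a delicate degree-$12$ analysis used only to rule out $s<t$). The paper's argument is short and elementary: if $\sigma_I|_U\equiv 1$, then also $\sigma_I\overline{\sigma}_I|_U\equiv 1$; the pluriclosed condition gives $\sigma_{i}\overline{\sigma}_{i}=\sigma_i^2$ for a real index and $\sigma_{i}\overline{\sigma}_{i}=|\sigma_i|^2=\sigma_{j}^{-1}$ for a complex index paired with the real index $j$, so $\sigma_I\overline{\sigma}_I$ is a monomial in $\sigma_1|_U,\dots,\sigma_s|_U$ with exponent $2[j\in I]-[(s+j)\in I]-[(2s+j)\in I]$ at $j$. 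Admissibility of $U$ (the vectors $(\log\sigma_1(u),\dots,\log\sigma_s(u))$ span a rank-$s$ lattice) forces every exponent to vanish, and the only $\{0,1\}$-solutions of $2a-b-c=0$ are $(0,0,0)$ and $(1,1,1)$; hence $I$ is a union of triples and the counts $\binom{s}{k}$ follow. Without this step your ``careful bookkeeping'' is an assertion, not a proof, and it is precisely the mathematical content of the theorem. A second, minor point: your description of the invariants as pairing ``each torus pair $dw_j\wedge d\bar w_j$ with a compensating hyperbolic contribution'' inverts the roles of the coordinates; the relation $\sigma_j|\sigma_{s+j}|^2=1$ produces the invariant $(2,1)$-class represented by $dw_j\wedge dz_j\wedge d\bar z_j$, with the $\binom{s}{q_1}$ factor coming from the closed forms $\bar\omega^j$ on the $\mathbb H^s$ factor.
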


We also provide the following cohomological characterization of the pluriclosed condition:
\renewcommand{\referenza}{\ref{thm:cohom-skt}}
\begin{theorem*}
Let $X(K, U)$ be an Oeljeklaus-Toma manifold of type $(s,t)$. The following statements are equivalent:
\begin{enumerate}
    \item $X(K, U)$ admits a pluriclosed metric;
    \item $s=t$, $h_{\overline\partial}^{2, 1}=s$, $h_{\overline\partial}^{4, 2}={s \choose 2}$, $h_{\overline\partial}^{1, 2}=0$ and the natural map
    \begin{equation*}
       \bigwedge: H_{\overline{\partial}}^{2, 1}(X) \times H_{\overline{\partial}}^{2, 1}(X) \rightarrow H_{\overline{\partial}}^{4, 2}(X)
    \end{equation*}
    is surjective.
    \end{enumerate}
\end{theorem*}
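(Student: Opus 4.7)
My plan is to establish the two implications separately, using Corollary \ref{rem:dubickas} (the number-theoretic characterization of pluriclosed metrics on Oeljeklaus-Toma manifolds) and Theorem \ref{thm: E1-isotype OT mfds} (the $E_1$-isomorphism type of the double complex) as the main tools.

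For the direction $(1) \Rightarrow (2)$, Corollary \ref{rem:dubickas} immediately provides $s=t$ together with the unit conditions. Plugging these into the Hodge number formula of Theorem \ref{cohomology} yields $h^{2,1}_{\overline\partial} = {s \choose 0}{s \choose 1} = s$, $h^{4,2}_{\overline\partial} = {s \choose 0}{s \choose 2} = {s \choose 2}$, and $h^{1,2}_{\overline\partial} = 0$ (since $p=1$ is odd). For the surjectivity of $\wedge$, I will extract from the proof of Theorem \ref{cohomology} an explicit set of invariant representatives: under the condition $\sigma_j(u)|\sigma_{s+j}(u)|^2=1$, each form $\eta_j := dz_j \wedge dw_j \wedge d\bar w_j$ is $(\mathcal{O}_K \rtimes U)$-invariant, and by dimension count the classes $[\eta_j]$ span $H^{2,1}_{\overline\partial}(X)$. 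Their pairwise products $[\eta_i] \wedge [\eta_j] = [dz_i \wedge dz_j \wedge dw_i \wedge dw_j \wedge d\bar w_i \wedge d\bar w_j]$ for $i < j$ are ${s \choose 2}$ linearly independent invariant $(4,2)$-classes, which therefore span $H^{4,2}_{\overline\partial}(X)$ by dimension.

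For the direction $(2) \Rightarrow (1)$, I will use Theorem \ref{thm: E1-isotype OT mfds} to rewrite the relevant Hodge numbers of an arbitrary OT manifold as $h^{p,q}_{\overline\partial}(X) = \sum_{q_1+q_2=q}{s \choose q_1}\rho_{p,q_2}$. The vanishing $h^{1,2}_{\overline\partial} = 0$ forces $\rho_{1,q_2} = 0$ for $q_2 \in \{0,1,2\}$ by non-negativity. The identity $h^{2,1}_{\overline\partial} = \rho_{2,1} + s\,\rho_{2,0} = s$ admits only the non-negative integer solutions $(\rho_{2,0}, \rho_{2,1}) \in \{(1, 0),(0, s)\}$. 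The surjectivity of $\wedge$ combined with $h^{4,2}_{\overline\partial} = {s \choose 2}$ rules out the first solution when $s \geq 2$: if every class in $H^{2,1}_{\overline\partial}(X)$ shared a single invariant $(2,0)$-factor $\alpha$, then the image of $\wedge$ would be confined to the subspace spanned by the products $\alpha^{\wedge 2} \wedge(\text{antiholomorphic pairs})$, which is too small to cover ${s \choose 2}$ independent classes in $H^{4,2}_{\overline\partial}(X)$. Hence $\rho_{2,1}=s$. Tracing this back through Theorem \ref{thm:ot-cohom} and the definition of $\rho_{p,q}$ identifies $s$ $U$-invariant $(2,1)$-forms which, by the surjectivity constraint, must be of the diagonal shape $dz_j \wedge dw_j \wedge d\bar w_j$; their invariance amounts precisely to $\sigma_j(u)|\sigma_{s+j}(u)|^2=1$ for every $u \in U$ and $j \in \{1,\ldots,s\}$, and Corollary \ref{rem:dubickas} then yields the existence of a pluriclosed metric. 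The case $s=1$ can be handled directly since the condition $\sigma_1(u)|\sigma_2(u)|^2=1$ follows automatically from total positivity of $u$ and $N_{K/\mathbb{Q}}(u) = \pm 1$.

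The main obstacle will be the final step of $(2) \Rightarrow (1)$, i.e., translating the numerical identity $\rho_{2,1}=s$ into the precise diagonal character relations. In particular, one must use surjectivity not just to eliminate the degenerate case $\rho_{2,0}=1$ but also to force the counted invariants to pair indices diagonally (matching $j$ on $\sigma_j$, $\sigma_{s+j}$, and $\bar\sigma_{s+j}$) rather than via more dispersed identities $\sigma_i(u)\sigma_{s+j}(u)\bar\sigma_{s+k}(u) = 1$ with $(i,j,k)$ distinct. A combinatorial and number-theoretic analysis in the spirit of Theorem \ref{unitthm}, cataloguing which such character identities can simultaneously hold inside the admissible unit group $U$, is what I expect to carry most of the weight of the argument.
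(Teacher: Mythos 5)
Your outline for $(1)\Rightarrow(2)$ matches the paper's argument, except for one concrete slip: under the action $R_u$ one has $w_j\mapsto\sigma_j(u)w_j$ and $z_j\mapsto\sigma_{s+j}(u)z_j$, so your proposed representative $dz_j\wedge dw_j\wedge d\bar w_j$ scales by $\sigma_{s+j}(u)\sigma_j(u)^2$ and is \emph{not} invariant under the pluriclosed condition; the correct invariant $(2,1)$-forms are $dw_j\wedge dz_j\wedge d\bar z_j$, which scale by $\sigma_j(u)|\sigma_{s+j}(u)|^2=1$ (cf.\ Remark \ref{rem:inv-pluriclosed}). With that correction the wedge and dimension-count argument goes through as you describe.

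The genuine gap is in $(2)\Rightarrow(1)$. You correctly reduce to $\rho_{2,1}=s$, i.e.\ to the existence of exactly $s$ relations $\sigma_{i_{k1}}\sigma_{i_{k2}}\bar\sigma_{i_{k3}}|_U\equiv 1$, but you leave the decisive step --- that after relabeling these must be exactly $\sigma_j\sigma_{s+j}\bar\sigma_{s+j}|_U\equiv1$ --- as a declared expectation, to be settled by ``a combinatorial and number-theoretic analysis in the spirit of Theorem \ref{unitthm}.'' No argument is actually supplied there, and no such heavy number theory is needed: the paper closes this step in three short moves, all cohomological or combinatorial. (i) Each class in $H^{2,1}_{\overline\partial}$ is represented by a decomposable form supported on one index triple, so two triples sharing an index wedge to zero; surjectivity of $\bigwedge$ onto the ${s\choose 2}$-dimensional $H^{4,2}_{\overline\partial}$ therefore forces the $s$ triples to be pairwise disjoint, hence (using $s=t$, so $3s=s+2t$) to partition $\{1,\dots,s+2t\}$. (ii) A triple containing no real embedding would, upon conjugation, yield a relation counted by $\rho_{1,2}$, contradicting $h^{1,2}_{\overline\partial}=0$. (iii) If the two complex embeddings in a triple were not conjugate to each other, the conjugate relation would be a \emph{different} one of the $s$ triples sharing the real index with the original, contradicting the disjointness from (i). Your proposal has the ingredients for (i) and notes $\rho_{1,2}=0$, but steps (ii) and (iii) --- in particular playing the conjugation-closure of the set of relations against the disjointness --- are the missing idea, and without them the implication is not proved.
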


\bigskip

The paper is organized as follows.
In Section \ref{sec:ot}, we recall the construction of Oeljeklaus-Toma manifolds and their solvmanifold structure.
In Section \ref{sec:metric}, we investigate metric properties of Oeljeklaus-Toma manifolds, in particular, the existence of pluriclosed metrics (Theorem \ref{unitthm} and Corollary \ref{rem:dubickas}) and astheno-K\"ahler metrics (Proposition \ref{prop:ddcomegak} and Corollary \ref{cor:ak}).
In Section \ref{sec:cohom}, we study the double complex of an Oeljeklaus-Toma manifold (Theorem \ref{thm: E1-isotype OT mfds}) and its Bott-Chern cohomology (Corollary \ref{cor:bc-ot}), and then focus on the cohomologies of pluriclosed Oeljeklaus-Toma manifolds (Theorem \ref{cohomology} and Theorem \ref{thm:cohom-skt}).

\bigskip

{
\noindent{\small
{\itshape Acknowledgments.} The authors thank Matei Toma and Victor Vuletescu for many interesting conversations. The last named author further expresses his gratitude to the first and third named authors and the University of Florence for the hospitality and excellent working conditions during a stay in the course of which a part of this article was written.
Many thanks also to the anonymous Referees for their careful reading and the useful comments, that improved the presentation of the paper.
}
}

\section{The class of Oeljeklaus-Toma manifolds}\label{sec:ot}

\subsection{Construction}\label{subsec:ot}
We briefly recall their construction.
Fix $s,t\in\mathbb N\setminus\{0\}$.
Let $K$ be an algebraic number field, say,
$$K\simeq\mathbb{Q}[X]\slash(f),$$
where $f\in\mathbb{Q}[X]$ is a monic irreducible polynomial of degree $n=[K:\mathbb{Q}]$ with $s$ real roots, and $2t$ complex roots.
Such a field always exists \cite[Remark 1.1]{OT}.
Denote by $a_1,\ldots,a_s$ the real roots of $f$, and by $a_{s+1},\ldots,a_{s+t},a_{s+t+1}=\bar a_{s+1},\ldots,a_{s+2t}=\bar a_{s+t}$ the complex non-real roots of $f$.
By considering $X\mapsto a_j$, he field $K$ admits $s+2t$ embeddings in $\mathbb{C}$ by the conjugate roots, more precisely, $s$ real embeddings and $2t$ complex embeddings:
$$
\begin{gathered}
\sigma_1,\ldots,\sigma_s\colon K\to \mathbb{R},\\
\sigma_{s+1},\ldots,\sigma_{s+t},\sigma_{s+t+1}=\overline\sigma_{s+1},\ldots,\sigma_{s+2t}=\overline\sigma_{s+t} \colon K\to\mathbb{C}.
\end{gathered}
$$

The ring $\mathcal O_K$ of algebraic integers of $K$ is a finitely-generated free Abelian group of rank $n$.
The multiplicative group $\mathcal O_K^*$ of units of $\mathcal O_K$ is a finitely-generated free Abelian group of rank $s+t-1$, by the Dirichlet unit theorem.
The group $\mathcal O_K^{*,+}$ of totally positive units is a finite index subgroup of $\mathcal O_K^*$.

Denote by $\mathbb H:=\{z\in\mathbb C : \Im z>0\}$ the upper half-plane.
By \cite[page 162]{OT}, one can always choose an {\em admissible subgroup} $U\subset\mathcal{O}_K^{*,+}$, of rank $s$, such that the fixed-point-free action
$$ \mathcal{O}_K\rtimes U\circlearrowleft\mathbb H^s\times\mathbb C^t $$
induced by
\begin{equation*}
\begin{gathered}
\begin{split}
T\colon &\mathcal{O}_K \circlearrowleft \mathbb H^s\times\mathbb C^t, \\
T_a(w_1,\ldots,w_s,z_{s+1},\ldots,z_{s+t}) &:= (w_1+\sigma_1(a),\ldots,z_{s+t}+\sigma_{s+t}(a)),
\end{split}
\\
\begin{split}
R\colon &\mathcal{O}_K^{*,+} \circlearrowleft \mathbb H^s\times\mathbb C^t, \\
R_u(w_1,\ldots,w_s,z_{s+1},\ldots,z_{s+t}) &:= (w_1\cdot \sigma_1(u),\ldots,z_{s+t}\cdot \sigma_{s+t}(u)),
\end{split}
\end{gathered}
\end{equation*}
is properly discontinuous and co-compact.
One defines the {\em Oeljeklaus-Toma manifold} of type $(s,t)$ associated with the algebraic number field $K$ and to the admissible subgroup $U$ of $\mathcal O_K^{*,+}$ as
$$ X(K,U) :=\left. \mathbb H^s\times\mathbb C^t \middle\slash \mathcal O_K\rtimes U \right..$$
In particular, for an algebraic number field $K$ with $s=1$ real embedding and $2t=2$ complex embeddings, $X(K,\mathcal O_K^{*,+})$ is an Inoue-Bombieri surface of type $S_M$ \cite{inoue, bombieri}.

The Oeljeklaus-Toma manifold $X(K,U)$ is called {\em of simple type} if there exists no proper intermediate field extension $\mathbb{Q}\subset K^\prime \subset K$ with $U\subseteq\mathcal O_{K^\prime}^{*,+}$. This is equivalent to saying that there exists no holomorphic foliation of $X(K,U)$ with a leaf isomorphic to $X(K^\prime,U)$ \cite[Remark 1.7]{OT}.

\subsection{Solvmanifold structure} \label{sec:ot-solvmanifold}
Following \cite{k13}, we can endow Oeljeklaus-Toma manifolds with a structure of solvmanifold, (namely, a compact quotient of a solvable Lie group by a co-compact discrete subgroup,) in such a way that the complex structure is left-invariant, (namely, it descends from a complex structure on the Lie group that is invariant by left-translations). For later use, we recall here the construction of a left-invariant co-frame of $(1,0)$-forms and its structure equations, following the description in \cite[Section 6]{k13} and \cite[Section 2]{o20}.

On $\mathbb H^s \times \mathbb C^t$, take coordinates $(w^1,\ldots,w^s,z^1,\ldots,z^t)$.
By the very construction,
$$\left\{\left(\log\sigma_1(u), \ldots, \log\sigma_s(u)\right) : u\in U\right\}$$
yields a lattice in $\mathbb R^s$ of rank $s$.
In particular, fix any generators $\{ u_1, \ldots, u_s \}$ for $U$, then there exist real numbers $b_{k,i}\in\mathbb R$ and $c_{k,i}\in\mathbb R$, for $k\in\{1,\ldots,s\}$, $i\in\{1,\ldots,t\}$, such that, for any $i\in\{1,\ldots,t\}$,
$$
\left(
\begin{matrix}
b_{1,i} & \cdots & b_{s,i}
\end{matrix}
\right)
\cdot
\left(
\begin{matrix}
\log \sigma_1(u_1) & \cdots & \log\sigma_1(u_s) \\
\vdots & \ddots & \vdots \\
\log \sigma_s(u_1) & \cdots & \log\sigma_s(u_s)
\end{matrix}
\right)
=
\left(
\begin{matrix}
2\log|\sigma_{s+i}(u_1)| & \cdots & 2\log|\sigma_{s+i}(u_s)|
\end{matrix}
\right)
$$
and
$$
\left(
\begin{matrix}
c_{1,i} & \cdots & c_{s,i}
\end{matrix}
\right)
\cdot
\left(
\begin{matrix}
\log \sigma_1(u_1) & \cdots & \log\sigma_1(u_s) \\
\vdots & \ddots & \vdots \\
\log \sigma_s(u_1) & \cdots & \log\sigma_s(u_s)
\end{matrix}
\right)
=
\left(
\begin{matrix}
\arg\sigma_{s+i}(u_1) & \cdots & \arg\sigma_{s+i}(u_s)
\end{matrix}
\right) ,
$$
equivalently, for any $u\in U$ and any $i\in\{1,\ldots,t\}$, we have
$$ \sigma_{s+i}(u)=\prod_{k=1}^{s}\sigma_k(u)^{\frac{1}{2}b_{k,i}} \cdot \exp\left(\sqrt{-1}\sum_{k=1}^{s}c_{k,i}\log \sigma_k(u)\right).$$

The manifold $\mathbb H^s\times \mathbb C^t$ is endowed with the product
\begin{eqnarray*}
\lefteqn{ (\ldots, w^k,\ldots,z^i,\ldots) * (\ldots,\tilde w^k,\ldots, \tilde z^i,\ldots) } \\
&=&
\left( \ldots, \Re w^k+\Im w^k\cdot \Re \tilde w^k+\sqrt{-1}\Im w^k \cdot \Im\tilde w^k, \ldots, \right.\\
&& \left. \ldots, z^i+\prod_{k=1}^{s} (\Im w^k)^{\frac{1}{2}b_{k,i}}\exp\left(\sqrt{-1}\sum_{k=1}^{s}c_{k,i}\log (\Im w^k)\right)\tilde z^i, \ldots \right).
\end{eqnarray*}
It gives $\mathbb H^s\times \mathbb C^t$ a structure of solvable Lie group.
One can show that
$$ \mathcal O_K \rtimes U \subset \mathbb H^s \times \mathbb C^t, \qquad (u,a) \mapsto \left(\ldots, \sigma_k(a)+\sqrt{-1}\sigma_k(u),\ldots, \sigma_{s+i}(a),\ldots\right) $$
is a discrete subgroup of $(\mathbb H^s \times \mathbb C^t,*)$.

One can check that the following $(1,0)$-forms yield a left-invariant coframe for $(\mathbb H^s \times \mathbb C^t,*)$:
\begin{equation}\label{eq}
\left\{\begin{array}{ll}
\omega^k := \frac{1}{\Im w^k}dw^k, & \text{ for } k \in\{1,\ldots, s\}, \\
\gamma^i := \prod_{k=1}^{s} (\Im w^k)^{-\frac{1}{2}b_{k,i}}\exp\left(-\sqrt{-1}\sum_{k=1}^{s}c_{k,i}\log(\Im w^k)\right)dz^i, & \text{ for }i\in\{1,\ldots,t\},
\end{array}\right.
\end{equation}
with structure equations
\begin{equation}\label{eq:struct-eq}
\left\{\begin{array}{rcll}
d\omega^k&=&\frac{\sqrt{-1}}{2}\omega^k\wedge\bar\omega^k, & \text{ for } k \in\{1,\ldots, s\}, \\
d\gamma^i&=&\sum_{k=1}^{s}\left(\frac{\sqrt{-1}}{4}b_{k,i}-\frac{1}{2}c_{k,i}\right)\omega^k\wedge\gamma^i\\
&&+\sum_{k=1}^{s}\left(-\frac{\sqrt{-1}}{4}b_{k,i}+\frac{1}{2}c_{k,i}\right)\bar\omega^k\wedge\gamma^i & \text{ for }i\in\{1,\ldots,t\}.
\end{array}\right.
\end{equation}

\section{Metric properties of Oeljeklaus-Toma manifolds}\label{sec:metric}

On a complex manifold $X$ of complex dimension $n$ endowed with a Hermitian metric $g$, denote by $J$ the integrable almost-complex structure and by $\omega:=g(J\_,\_)$ the associated $(1,1)$-form. We recall some special classes of Hermitian metrics \cite{gray-hervella}:
\begin{itemize}
\item {\em K\"ahler} metrics satisfy $d\omega=0$;
\item {\em taming symplectic} (aka {\em Hermitian-symplectic}) structures  $\Omega$ are symplectic forms satisfying $\Omega(\_,J\_)>0$;
\item {\em pluriclosed} (aka {\em SKT}) metrics satisfy $\partial\overline\partial\omega=0$;
\item if $n\geq3$, {\em astheno-K\" ahler} metrics satisfy $\partial \overline{\partial}\omega^{n-2}=0$;
\item {\em balanced} metrics satisfy $d\omega^{n-1}=0$;
\item {\em Gauduchon} metrics satisfy $\partial\overline\partial\omega^{n-1}=0$;
\item {\em strongly Gauduchon} metrics satisfy $\partial\omega^{n-1}$ is $\overline\partial$-exact;
\item
for $k\in\{1,\ldots,n-1\}$, {\em $k$-Gauduchon} metrics satisfy $\partial\overline\partial\omega^k\wedge\omega^{n-k-1}=0$; notice that the case $k=n-1$ coincides with Gauduchon metrics, the case $k=1$ includes pluriclosed metrics, the case $k=n-2$ includes astheno-K\"ahler metrics;
\item {\em locally conformally K\"ahler} (shortly, {\em LCK}) metrics satisfy $d\omega=\theta\wedge\omega$ for some $1$-form $\theta$ such that $d\theta=0$;
\item {\em locally conformally balanced} (shortly, {\em LCB}) metrics satisfy $d\omega^{n-1}=\theta\wedge\omega^{n-1}$ for some $1$-form $\theta$ such that $d\theta=0$.
\end{itemize}

By \cite[Th\'eor\`eme 1]{gauduchon-cras1977}, Gauduchon metrics always exist on compact complex manifolds.
In what follows, we collect what is known about the existence of special Hermitian metrics on Oeljeklaus-Toma manifolds, after Oeljeklaus, Toma, Vuletescu, Dubickas, Otiman, Fino, Kasuya, Vezzoni:

\begin{theorem}[{\cite{OT, dub, fkv15, o20}}]\label{thm:ot-metrics}
Let $X(K, U)$ be an Oeljeklaus-Toma manifold of type $(s, t)$.
\begin{itemize}
\item It never admits K\"ahler metrics. \cite[Proposition 2.5]{OT}
\item It never admits taming symplectic forms. \cite[Theorem 1.1]{fkv15}
\item It admits pluriclosed metrics if and only if $s\leq t$ and, for any $u\in U$,
\begin{equation}\label{eq:ot-pluriclosed}
\begin{gathered}
\sigma_j(u)|\sigma_{s+j}(u)|^2=1 , \quad \text{ for any } j\in\{1,\ldots,s\} ,\\
|\sigma_{s+j}(u)|^2=1, \quad \text{ for any }j\in\{s+1,\ldots,t\} ,
\end{gathered}
\end{equation}
after possibly relabeling the embeddings. \cite[Theorem 1.1]{o20}
\item It never admits balanced metrics. \cite[Theorem 1.2]{o20}
\item It admits an LCK metric if and only if $|\sigma_{s+1}(u)|=\cdots=|\sigma_{s+t}(u)|$ for any $u\in U$. \cite[Proposition 2.9]{OT}, \cite[Theorem 8]{dub}
\begin{itemize}
\item If $s\geq1$ and $t=1$, then there is an admissible subgroup $U$ with this property. \cite[Proposition 2.9]{OT}
\item If $t\geq2$, then there is no admissible subgroup $U$ with this property, \cite{dv}.
\end{itemize}
\item It always admits an LCB metric. \cite[Theorem 1.3]{o20}
\end{itemize}
\end{theorem}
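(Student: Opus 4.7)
The strategy across all parts is to exploit the solvmanifold structure from Section \ref{sec:ot-solvmanifold}: the left-invariant coframe $\{\omega^k,\gamma^i\}$ with structure equations \eqref{eq:struct-eq} translates each metric condition into algebraic constraints on the coefficients $b_{k,i}, c_{k,i}$, which then unwind into number-theoretic conditions on the embeddings $\sigma_j$ applied to the units in $U$.

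For the $\partial\bar\partial$-type conditions, and in particular the pluriclosed case, a symmetrization argument over the maximal compact subgroup acting on $\mathbb H^s\times\mathbb C^t$ reduces existence to left-invariant metrics. One then writes a general positive invariant $(1,1)$-form as $\omega=\sqrt{-1}\sum_k a_k\,\omega^k\wedge\bar\omega^k+\sqrt{-1}\sum_i c_i\,\gamma^i\wedge\bar\gamma^i$ plus off-diagonal terms, computes $\partial\bar\partial\omega$ using \eqref{eq:struct-eq}, and demands the vanishing of every monomial. After careful bookkeeping this forces $b_{k,i}$ and $c_{k,i}$ into very specific values; recalling that $b_{k,i}$ encodes $\log|\sigma_{s+i}(u_k)|^2$ and $c_{k,i}$ encodes $\arg\sigma_{s+i}(u_k)$, one reads off the arithmetic system \eqref{eq:ot-pluriclosed}.

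For the non-existence of K\"ahler, taming symplectic and balanced metrics, the obstructions are cohomological: such a metric would produce a distinguished class in $H^2$ or $H^{2n-2}$ with incompatible positivity, excluded by explicit knowledge of these groups together with the structure of the $\mathbb C^t$-fibres of the natural map $X\to\mathbb T^s$. The LCK criterion identifies candidate Lee forms $\theta$ with invariant combinations of $\omega^k+\bar\omega^k$, reduces $d\omega=\theta\wedge\omega$ to the equality $|\sigma_{s+1}(u)|=\cdots=|\sigma_{s+t}(u)|$, and then splits by $t$: for $t=1$ the condition is essentially vacuous and a direct construction produces an LCK metric, while for $t\geq 2$ a rank argument on the vectors $(\log|\sigma_{s+1}(u)|,\ldots,\log|\sigma_{s+t}(u)|)$ via Dirichlet's unit theorem shows that no admissible subgroup can satisfy it. The LCB assertion is proved by exhibiting an explicit invariant form and verifying the equation by a direct computation from \eqref{eq:struct-eq}.

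The main obstacle is the pluriclosed case: one must systematically track all components (including the mixed terms of type $\omega^k\wedge\bar\omega^\ell\wedge\gamma^i\wedge\bar\gamma^j$) in $\partial\bar\partial\omega$ and then convert their simultaneous vanishing into arithmetic identities in $U$. This translation is precisely what motivates the sharper number-theoretic Theorem \ref{unitthm} ruling out $s<t$, and it is also the step where the previously known version \cite[Theorem 1.1]{o20} leaves room for the improvement recorded later in Corollary \ref{rem:dubickas}.
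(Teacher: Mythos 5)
The paper does not actually prove this statement: Theorem \ref{thm:ot-metrics} is a survey item, each bullet being quoted from the literature with its own citation (\cite{OT}, \cite{fkv15}, \cite{o20}, \cite{dub}, \cite{dv}), so there is no internal argument to compare yours against. Judged on its own terms, your sketch correctly identifies the common mechanism behind most of the cited proofs --- reduction to left-invariant data via the coframe and structure equations \eqref{eq:struct-eq}, averaging against a bi-invariant volume form, and translation of the resulting constraints on the $b_{k,i}$, $c_{k,i}$ into multiplicative relations among the $\sigma_j(u)$ --- but it remains a strategy outline rather than a proof, and one step is genuinely wrong as stated.

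The failing step is the LCK non-existence for $t\geq 2$, which you claim follows from ``a rank argument on the vectors $(\log|\sigma_{s+1}(u)|,\ldots,\log|\sigma_{s+t}(u)|)$ via Dirichlet's unit theorem.'' It does not: inside the rank-$(s+t-1)$ logarithmic unit lattice, the condition $|\sigma_{s+1}(u)|=\cdots=|\sigma_{s+t}(u)|$ cuts out a subgroup whose rank can a priori be as large as $s$, and admissibility only requires the projection onto the real places to have rank $s$, so no dimension count excludes an admissible $U$ satisfying the condition. This is exactly why the question stayed open after \cite{OT} and the partial results of \cite{v} and \cite{dub}, and why the general case required the substantially harder number-theoretic work of \cite{dv}. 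Two secondary inaccuracies: the symmetrization used for the pluriclosed (and taming symplectic, balanced) cases is not over ``the maximal compact subgroup acting on $\mathbb H^s\times\mathbb C^t$'' but is the averaging of the tensor over the compact quotient against a bi-invariant volume form, precisely as the paper itself employs in the proof of Proposition \ref{prop:ddcomegak}; and the K\"ahler obstruction in \cite{OT} is the concrete fact that $H^2_{dR}$ is generated by wedges of the $s$ one-dimensional classes coming from the real places, so that the $n$-th power of any $2$-class vanishes, rather than a generic ``incompatible positivity'' of a distinguished class. Finally, note that the sharpening of the third bullet (ruling out $s<t$) is not part of this theorem at all; it is the content of Theorem \ref{unitthm} and Corollary \ref{rem:dubickas}, which the paper does prove.
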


By methods similar to those used in \cite{dub} and \cite{dub2}, we shall prove:

\begin{theorem}\label{unitthm}
	Let $K$ be a number field with $s$ real embeddings and $2t$ complex 
	embeddings such that $$1 \leq s <t.$$
	Then, the smallest $d=s+2t$ for which there is a unit $u \in K$ of degree $d$ satisfying
		\begin{equation}\label{eq:ot-pluriclosed1}
			\sigma_j(u)|\sigma_{s+j}(u)|^2=1 
		\end{equation}
		for each  $j\in\{1,\ldots,s\}$ and  
		\begin{equation}\label{eq:ot-pluriclosed11}
			|\sigma_{s+j}(u)|=1 
		\end{equation}
		for each  $j \in \{s+1,\ldots,t\}$ is $d=12$.
		Moreover, there is no group of units 
		$U \subset\mathcal{O}_K^{*,+}$
		of rank $s$ such that for each $u \in U$ (not necessarily of degree $d$) we have  \eqref{eq:ot-pluriclosed1} and \eqref{eq:ot-pluriclosed11}.
\end{theorem}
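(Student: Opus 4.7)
The plan is first to translate the conditions \eqref{eq:ot-pluriclosed1}--\eqref{eq:ot-pluriclosed11} into multiplicative relations among the conjugates $u_k:=\sigma_k(u)$ of $u$ inside the Galois closure $L$ of $K/\mathbb{Q}$. Using $\overline{\sigma}_{s+j}=\sigma_{s+t+j}$, they become
\begin{equation*}
u_j\, u_{s+j}\, u_{s+t+j}=1 \ (1\le j\le s), \qquad u_{s+j}\, u_{s+t+j}=1 \ (s+1\le j\le t).
\end{equation*}
Passing to $\ell_k:=\log|\sigma_k(u)|$, these $t$ equations cut out, inside the Dirichlet hyperplane of $\mathbb{R}^{s+t}$, an $s$-dimensional subspace $W$ (one equation is redundant modulo the hyperplane). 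Writing $G:=\mathrm{Gal}(L/\mathbb{Q})$, which acts transitively on $\{u_1,\dots,u_d\}$, every $g\in G$ transforms these identities into further multiplicative relations which $u$ must also satisfy.

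For the first assertion I would rule out the finite list of pairs $(s,t)$ with $1\le s<t$ and $d=s+2t<12$, namely $(1,2),(1,3),(2,3),(1,4),(2,4),(1,5),(3,4)$. For every transitive $G\le S_d$ carrying an involution of cycle type $1^s 2^t$ (the shape of complex conjugation), I would study the $G$-invariant sublattice $\Lambda\subseteq\mathbb{Z}^d$ generated by the $t$ original exponent vectors together with their $G$-translates; showing that $\Lambda$ has full rank $d$ forces by Kronecker's theorem every $u_k$, and hence $u$, to be a root of unity, contradicting $[\mathbb{Q}(u):\mathbb{Q}]=d>1$. The mechanism is visible already at $d=5$: by transitivity there is $g\in G$ with $g(\alpha)=\gamma$ (where $\alpha$ is the real conjugate and $\gamma$ a ``flat'' complex conjugate); applying $g$ to $\alpha\beta\bar\beta=1$ gives $g(\beta)\,g(\bar\beta)=\bar\gamma$, and inspection of the five possible values of the pair $(g(\beta),g(\bar\beta))$ forces $|\alpha|=1$ and $u$ torsion. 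For the realisation at $d=12$, $(s,t)=(2,5)$, I would exhibit a concrete totally positive unit, e.g.\ via a compositum of a real quadratic field with a suitable CM extension, or by a bounded polynomial search.

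For the second assertion, I pass to the Dirichlet logarithmic embedding $\mathrm{Log}\colon\mathcal{O}_K^*\to\mathbb{R}^{s+t}$. If a rank-$s$ subgroup $U$ existed, $\mathrm{Log}(U)$ would be a rank-$s$ sublattice of $W$ spanning $W$ over $\mathbb{R}$, so $W$ would be $\mathbb{Q}$-rational with respect to the Dirichlet lattice. Lifting to $\mathbb{R}^{s+2t}$ with its transitive $G$-action by coordinate permutation, the constraint $\ell_{s+j}=0$ for $j>s$ picks out a proper non-empty subset of indices whose $G$-orbit is the full index set; using that $\mathrm{Log}(\mathcal{O}_K^*)\otimes\mathbb{Q}$ is essentially the augmentation quotient of $\mathbb{Q}[G/\mathrm{Gal}(L/K)]$ (Herbrand), the $G$-stable closure of the defining relations of $W$ cuts $W$ down to a subspace of $\mathbb{Q}$-dimension $<s$, forcing $\mathrm{Log}(U)\otimes\mathbb{Q}$ into this smaller subspace, a contradiction. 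The main obstacles are: the case analysis at $(s,t)=(3,4)$, where many transitive subgroups of $S_{11}$ must be examined; the explicit construction at $d=12$; and justifying the ``Galois closure'' step when $U$ is not $G$-stable, which requires careful use of the $\mathbb{Q}[G]$-module structure of $\mathrm{Log}(\mathcal{O}_K^*)\otimes\mathbb{Q}$ together with the combinatorics of $G$-orbits on embeddings.
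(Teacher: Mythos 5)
Your plan has the right starting point (translating the hypotheses into multiplicative relations among conjugates and exploiting transitivity of the Galois action), but it is missing the one idea that makes the whole theorem tractable, and the substitutes you propose are not carried out. Since $s<t$, condition \eqref{eq:ot-pluriclosed11} is nonvacuous, so $u$ has a conjugate on the unit circle and is therefore a \emph{reciprocal} unit; if $u\neq 1$ it has even degree, $\mathbb{Q}(u)\subseteq K$ forces $d$ even, hence $s$ even. This immediately discards all of your cases $(1,2),(1,3),(1,4),(1,5),(3,4)$ and leaves only $(2,3)$ and $(2,4)$, which the paper eliminates by short conjugate-chasing arguments using only transitivity and an extremal-annulus (Kronecker-type) consideration --- no enumeration of transitive subgroups of $S_d$ is needed. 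More importantly, the same annulus argument shows no $\sigma_{s+j}(u)$ can be real, so the real conjugates of $u$ occur in reciprocal pairs among $\sigma_1(u),\dots,\sigma_s(u)$, giving the identity $\prod_{j=1}^s\sigma_j(u)=1$ for every $u$ satisfying \eqref{eq:ot-pluriclosed1}--\eqref{eq:ot-pluriclosed11}. That single identity proves the second assertion in one line: the rows of the matrix $\|\log\sigma_j(v_i)\|_{i,j=1,\dots,s}$ each sum to zero, so the matrix is degenerate and $U$ cannot have rank $s$.

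By contrast, your proposed mechanisms have genuine gaps. For the case analysis, checking that the $G$-saturation $\Lambda$ of the exponent lattice has full rank for \emph{every} transitive $G\le S_d$ containing an involution of cycle type $1^s2^t$ is a large unexecuted computation, and full rank is only a sufficient criterion: if it fails for some $G$ you conclude nothing and need a finer argument (it necessarily fails at $d=12$, where a unit exists, so the criterion cannot be uniformly decisive). For the second assertion, the claim that the ``$G$-stable closure of the defining relations cuts $W$ down to dimension $<s$'' is exactly what needs proof, and you flag it yourself as an obstacle; the Herbrand/augmentation-module framing does not by itself produce the required dimension drop. Finally, the existence half at $d=12$ requires an explicit unit (the paper constructs an irreducible reciprocal degree-$12$ polynomial over $\mathbb{Z}$ from a pair of conjugate sextics over $\mathbb{Q}(\sqrt{2})$); ``a bounded polynomial search'' is not a proof until the search succeeds and irreducibility and the signature $(2,5)$ are verified.
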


\begin{proof}
	Assume that a unit $u \in K$ satisfies
	\eqref{eq:ot-pluriclosed11}. Then, $u$ must be a reciprocal unit.
	(An algebraic number $\alpha$ is called {\it reciprocal} if $\alpha^{-1}$ is a conjugate of $\alpha$ over $\mathbb Q$.)
	We claim that 
	\begin{equation}\label{prod1}
		\prod_{j=1}^s \sigma_j(u)=1
	\end{equation}
	for each reciprocal unit $u \in \mathcal{O}_{K}^{*}$ satisfying \eqref{eq:ot-pluriclosed1} and \eqref{eq:ot-pluriclosed11}. 
	
	Note that $\sigma_j(u)>0$ for $j=1,\ldots,s$ by \eqref{eq:ot-pluriclosed1}, and hence
	$u \in \mathcal{O}_{K}^{*,+}$.
	Clearly, \eqref{prod1} is true for $u=1$. Otherwise, $u \in \mathcal{O}_{K}^{*,+}$ is irrational.
	Since $u$ is reciprocal and $u \notin \mathbb Q$, it must be of even degree. Since ${\mathbb Q}(u)$ is a subfield of $K$, the degree $n=s+2t$ of $K$ must be even. Consequently, $s$ is even.

	Recall that $\sigma_j(u)$ are real and positive for all $j=1,\dots,s$.
	We claim that $\sigma_{s+j}(u)$ are non-real for all $j>0$. Indeed, if $\sigma_{s+j}(u)$ were real for some $j>0$ then
	$\overline\sigma_{s+j}(u)=\sigma_{s+j}(u)$. So, by \eqref{eq:ot-pluriclosed1}, we must have $$u'' u'^2=1$$ for some two conjugates $u''=\sigma_j(u),u'=\sigma_{s+j}(u)$ of $u$ over $\mathbb Q$. Take the smallest
	$\rho \geq 1$ such that all the conjugates of $u$ lie in the annulus $\rho^{-1} \leq |z| \leq \rho$, with at least one conjugate lying on the boundary. By Kronecker's theorem (see {\itshape e.g.} \cite[Theorem 4.5.4]{pra}), since $u$ is an algebraic integer which is not a root unity, we must have $\rho>1$.
		
	Consider an automorphism $\tau \in \{\sigma_1,\dots,\sigma_{s+2t}\}$ which maps $u'$ to a conjugate lying on the boundary of the annulus, say $u$. Then, 
	$$1=\tau(u''u'^2)= \tau(u'') \tau(u')^2= u^2 \tau(u''),$$ where $|u| \in \{\rho, \rho^{-1}\}$. However, if $|u|=\rho$ then $$1=|u|^2 |\tau(u'')|=\rho^2 |\tau(u'')| \geq \rho^2 \rho^{-1}=\rho,$$ which is impossible. Likewise, for $|u|=\rho^{-1}$ we obtain $1=\rho^{-2} |\tau(u')| \leq \rho^{-2}\rho=\rho^{-1}$, which is impossible too. Consequently, all
	$\sigma_{s+j}(u)$ for $j>s$ are non-real. 
	
	In particular, this implies that the list $\sigma_1(u), \dots, \sigma_{s}(u)$ contains all the real conjugates of $u$, possibly with repetitions. Suppose the reciprocal unit $u$ is of degree $2\ell$, where $\ell \in {\mathbb N}$, and its conjugates are  $$u_1,u_1^{-1},\dots,u_{\ell},u_{\ell}^{-1}.$$  Then $s$ must be a multiple of $2\ell$, and the product in \eqref{prod1} contains 
	$s/2\ell$ copies of each real conjugate of $u$. This clearly yields \eqref{prod1}. 
	
	Suppose now that $U$ is admissible for $K$, and for each $u \in U$ the conditions \eqref{eq:ot-pluriclosed1}, \eqref{eq:ot-pluriclosed11} are satisfied. Since $U$ is of rank $s$, it must contain
	$s$ units, say $v_1,\dots,v_s$ for which the matrix
	$\|\log \sigma_{j}(v_i)\|_{i,j=1,\dots,s}$ is nondegenerate. 
	However, as we have already shown above, each $v_i \in U$ is reciprocal, and hence
	$\sum_{j=1}^s \log \sigma_j(v_i)=0$ by \eqref{prod1}. 
	It follows that the rank of $U$ is less than $s$, which proves the second assertion of the theorem.
	
	In order to prove the first assertion we first show that no $K$ of degree less than $12$ exists for which a unit $u \in \mathcal{O}_K^{*,+}$
	satisfies \eqref{eq:ot-pluriclosed1} and \eqref{eq:ot-pluriclosed11}. Assume such a unit of degree $d=s+2t$ exists in the number field $K$ with signature $(s,t)$.
	Then, as $s$ is even and $s<t$, we have
	$d \geq 12$, unless $(s,t)=(2,3)$ or $(s,t)=(2,4)$. 
	
	We first investigate the case $(s,t)=(2,3)$. Suppose that the real conjugates of $u$ are $u_1>1$ and $u_2=u_1^{-1}<1$. 
	By \eqref{eq:ot-pluriclosed1}, it also has conjugates $u_3, u_4=\overline{u_3},u_5,u_6$, where $u_1 |u_3|^2=1$ and $\{u_5,u_6\}=\{u_3^{-1},u_4^{-1}\}$. Without loss of generality, we may assume that $u_5=u_3^{-1}$ and $u_6=u_4^{-1}=\overline{u_3}^{-1}=\overline{u_5}$. Also, by \eqref{eq:ot-pluriclosed11}, it has two conjugates $u_7$ and $u_8=\overline{u_7}=u_7^{-1}$ on the unit circle. 
	For convenience we present them in a table. 
	
	\begin{center}
		\begin{table}[h!]
			\begin{tabular}{| c | c || c | c | c | c | c | c |}
			
			\hline
			
			$u_1$ & $u_2$ & $u_3$ & $u_4$ & $u_5$ & $u_6$ & $u_7$ & $u_8$   \\ \hline
			
			& $u_1^{-1}$ &  & $\overline{u_3}$ & $u_3^{-1}$ & $\overline{u_5} = u_4^{-1}$ & & $\overline{u_7}=u_7^{-1}$   \\ \hline
		
				\end{tabular}
			\medskip
		\caption{Conjugates of $u$.}
		\label{table:metric22222}
	\end{table}
\end{center}

	Recall that $u_1>1$
	and $$u_1|u_3|^2=u_1u_3u_4=1.$$
	Take an automorphism
	$\sigma \in \{\sigma_1,\dots,\sigma_8\}$ which maps $u_1$ to $u_7$. Then, from $u_1=u_3^{-1}u_4^{-1}=u_5u_6$ we obtain
	$u_7=\sigma(u_5)\sigma(u_6)$. Since $|\sigma(u_5) \sigma(u_6)|=|u_7|=1$ and $u_7$ is non-real, the only possibilities for
	$\{\sigma(u_5),\sigma(u_6)\}$ are $\{u_4,u_5\}$ and $\{u_3,u_6\}$.
	Hence, $u_7=u_4u_5$ or $u_7=u_3u_6$. Since $u_2=u_3u_4$, multiplying it by $u_7=u_4u_5$ we get $u_2u_7=u_3u_4^2u_5=u_4^2$. This, mapping $u_4$ to $u_1$, gives a contradiction by the modulus consideration. Likewise, in the case when $u_7=u_3u_6$, we deduce $u_2u_7=u_3u_4u_3u_6=u_3^2$, which leads to the same contradiction by mapping $u_3$ to $u_1$. This shows that for $K$
	with signature $(2,3)$ no unit $u \in \mathcal{O}_K^{*,+}$ of degree $8$
	satisfying \eqref{eq:ot-pluriclosed1}, 
	\eqref{eq:ot-pluriclosed11} exists. 
	
	Now, we investigate the case $(s,t)=(2,4)$. Then, without loss of generality we can label the conjugates 
	$u_1,\dots,u_8$ as in Table~\ref{table:metric22222}. In addition, by 
	\eqref{eq:ot-pluriclosed11}, there are two more conjugates $u_9$ and $u_{10}=\overline{u_9}=u_9^{-1}$ on the unit circle.
	From $u_1=u_5u_6$, as above we obtain $u_7=\sigma(u_5)\sigma(u_6)$. This time,   the possibilities for	$\{\sigma(u_5),\sigma(u_6)\}$ are not only $\{u_4,u_5\}$, $\{u_3,u_6\}$, but also $\{u_8,u_9\}$ and $\{u_8,u_{10}\}$. 
	In the first two cases the argument is exactly the 
	same as that above. Assume that $\{\sigma(u_5),\sigma(u_6)\}=\{u_8,u_9\}$. 
	Then, from $u_7=u_8u_9$ we find that $u_7^2=u_7u_8u_9=u_9$.   Now, mapping $u_7$ to $u_1$, we arrive to a contradiction by the modulus consideration.
	(The case $\{\sigma(u_5),\sigma(u_6)\}=\{u_8,u_{10}\}$ can be treated in the same fashion.)
	Therefore, for $K$
	with signature $(2,4)$ no unit $u \in \mathcal{O}_K^{*,+}$ of degree $10$
	satisfying \eqref{eq:ot-pluriclosed1}, 
	\eqref{eq:ot-pluriclosed11} exists. 
	
	In order to complete the proof of the theorem we will give an example of a number field $K$ with signature $(2,5)$ and a positive unit $u \in K$ of degree $12$ satisfying \eqref{eq:ot-pluriclosed1} and \eqref{eq:ot-pluriclosed11}. 
	Select the numbers 
	$$ S:= -12+9 \sqrt{2}=0.727922\dots \>\> \text{and} \>\> P := 33-22\sqrt{2}=1.887301\dots.$$
	Let $R$ and $T$ be two complex conjugate numbers satisfying 
	$R+T=S$, $RT=P$ and $\Im(R)>0$. Consider the polynomial
	$$x^3+Rx^2-Tx-1$$ and its reciprocal $$x^3+Tx^2-Rx-1.$$
	Their product is a reciprocal polynomial
	\begin{align*}
		G(x)&:=x^6+Sx^5+(P-S)x^4+(2P-S^2-2)x^3+(P-S)x^2+Sx+1\\&=
		x^6-12x^5+45x^4-242x^3+45x^2-12x+1+ (9x^5-31x^4+172x^3-31x^2+9x)\sqrt{2}.
	\end{align*}
	We claim that it has six roots on the unit circle. 
	
	Indeed, setting $y=x+x^{-1}$, and using the identities $x^3+x^{-3}=y^3-3y$, $x^2+x^{-2}=y^2-2$ we derive that
	\begin{align*}
		x^{-3}G(x)&=y^3-3y+S(y^2-2)+(P-S)y+2P-S^2-2 \\&=y^3+Sy^2+(P-S-3)y+2P-S^2-2S-2 \\&=
		y^3+(-12+9\sqrt{2})y^2+(42-31\sqrt{2})y-218+154 \sqrt{2}.
	\end{align*}
	The latter polynomial (in $y$) has three roots $$y_1=-1.724350\ldots, \>\> y_2=-0.110593\ldots, \>\> y_3=1.107021\ldots$$ in the interval $(-2,2)$, so the six roots of $G$ coming from $x+x^{-1}=y_i$ are all on $|z|=1$. 
	
	Similarly, setting $$S':= -12-9 \sqrt{2}=-24.727922\ldots \>\> \text{and} \>\> P':= 33+22\sqrt{2}=64.112698\ldots,$$ we define
	two negative real numbers $R'= -21.784939\ldots$ and $T'=-2.942982\ldots$  which satisfy
	$R'+T'=S'$, $R'T'=P'$. The product of two reciprocal polynomials
	$$x^3+R'x^2-T'x-1 \>\> \text{and} \>\>x^3+T'x-R'x-1$$
	equals
	\begin{align*}
		\overline{G}(x)&:=x^6+S'x^5+(P'-S')x^4+(2P'-S'^2-2)x^3+(P'-S')x^2+S'x+1\\&=	x^6-12x^5+45x^4-242x^3+45x^2-12x+1- (9x^5-31x^4+172x^3-31x^2+9x)\sqrt{2}.
	\end{align*}
	This time, for $y=x+x^{-1}$ we find that
	$$x^{-3} \overline{G}(x)=	y^3+(-12-9\sqrt{2})y^2+(42+31\sqrt{2})y-218-154 \sqrt{2},$$
	and the latter polynomial (in $y$) has one real root 
	$21.697332\ldots$ and two complex conjugate roots. It follows that $\overline{G} \in {\mathbb R}[x]$ has two real roots, say $u_1=u=21.651145\ldots$ and $u_2=u^{-1}$, and two pairs of complex conjugate roots,
	$u_3, u_4=\overline{u_3}$ and $u_5=u_3^{-1}, u_6=\overline{u_5}=u_4^{-1}$. 
	Relabeling the conjugates if necessary, we can assume that
	$$x^3+R'x^2-T'x-1=(x-u_1)(x-u_3)(x-u_4)$$ and $$ x^3+T'x^2-R'x-1=(x-u_2)(x-u_5)(x-u_6).$$
	Here, we clearly have 
	\begin{equation}\label{lkop}
		u_1u_3u_4=1 \>\> \text{and} \>\> u_2u_5u_6=1.
	\end{equation}
	
	Suppose $u_7,\dots,u_{12}$
	are the six unimodular roots of $G$, so that
	\begin{equation}\label{lkop2}
		|u_7|=\dots=|u_{12}|=1. 
	\end{equation}
	Multiplying $G$ and $\overline{G}$ we find the reciprocal polynomial  
	$$ x^{12}-24x^{11}+72x^{10}-448x^9-191x^8-440x^7-432x^6-440x^5-191x^4-448x^3+72x^2-24x+1$$ in ${\mathbb Z}[x]$ with $12$ roots $u_1,\ldots,u_{12}$ satisfying \eqref{lkop} and \eqref{lkop2}. 
	One can easily verify with Maple that this polynomial is irreducible over $\mathbb Q$, so its root $u=21.651145\ldots$ is a reciprocal unit of degree $12$ satisfying the conditions \eqref{eq:ot-pluriclosed1} and \eqref{eq:ot-pluriclosed11}. 
	This completes the proof of the first claim of the theorem.
\end{proof}

As an immediate corollary, we get:

\begin{corollary}\label{rem:dubickas}
Let $X(K, U)$ be an Oeljeklaus-Toma manifold of type $(s, t)$.
It admits pluriclosed metrics if and only if $s = t$ and, for any $u\in U$,
\begin{equation}\label{eq:ot-pluriclosed-2}
\sigma_j(u)|\sigma_{s+j}(u)|^2=1 , \quad \text{ for any } j\in\{1,\ldots,s\} .
\end{equation}
\end{corollary}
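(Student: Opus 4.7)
The plan is to obtain this statement simply by combining the characterization recalled from \cite{o20} in Theorem \ref{thm:ot-metrics} with the rigidity result just established in Theorem \ref{unitthm}. The former tells us that $X(K,U)$ admits a pluriclosed metric if and only if $s\leq t$ and, possibly after relabeling the embeddings, every $u\in U$ satisfies
\[
\sigma_j(u)\,|\sigma_{s+j}(u)|^2=1 \quad (j=1,\ldots,s)
\qquad\text{and}\qquad
|\sigma_{s+j}(u)|^2=1 \quad (j=s+1,\ldots,t).
\]
So I only need to analyze this system of equations under the constraint $s\leq t$.

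First I would treat the direction that is automatic: if $s=t$ and the condition $\sigma_j(u)|\sigma_{s+j}(u)|^2=1$ holds for every $u\in U$ and every $j\in\{1,\ldots,s\}$, then the second family of equations in \eqref{eq:ot-pluriclosed} is vacuous (the index set $\{s+1,\ldots,t\}$ is empty), so Theorem \ref{thm:ot-metrics} directly produces a pluriclosed metric.

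For the converse, suppose $X(K,U)$ admits a pluriclosed metric. Theorem \ref{thm:ot-metrics} forces $s\leq t$ together with the two systems of equations above. If we had $s<t$, then in particular $U\subset\mathcal O_K^{*,+}$ would be an admissible subgroup of rank $s$ satisfying \eqref{eq:ot-pluriclosed1} and \eqref{eq:ot-pluriclosed11} for every $u\in U$, contradicting the second assertion of Theorem \ref{unitthm}. Hence $s=t$, the second condition is vacuous, and only $\sigma_j(u)|\sigma_{s+j}(u)|^2=1$ for $j\in\{1,\ldots,s\}$ survives, which is exactly \eqref{eq:ot-pluriclosed-2}.

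Since both directions are essentially immediate once Theorem \ref{unitthm} is available, there is no genuine obstacle here; the only care needed is to match the labeling conventions of Theorem \ref{thm:ot-metrics} (``after possibly relabeling the embeddings'') with the hypotheses of Theorem \ref{unitthm}, and to observe that the rank-$s$ admissibility of $U$ is precisely what makes the nondegeneracy hypothesis in the proof of Theorem \ref{unitthm} applicable.
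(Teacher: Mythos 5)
Your proposal is correct and matches the paper's intended argument: the paper presents Corollary \ref{rem:dubickas} as an immediate consequence of combining the characterization in Theorem \ref{thm:ot-metrics} (from \cite{o20}) with the second assertion of Theorem \ref{unitthm}, which rules out the case $s<t$ exactly as you do. Your additional remarks on the vacuity of the second family of equations when $s=t$ and on the rank-$s$ admissibility of $U$ are accurate and fill in the (short) details the paper leaves implicit.
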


For each $s \geq 1$ the construction of such fields $K$ with signature $(s,s)$ and such admissible 
group of units 
$U \subset\mathcal{O}_K^{*,+}$
of rank $s$ for which \eqref{eq:ot-pluriclosed-2} holds for any $u \in U$ was given in \cite{dub2}.

As for astheno-K\"ahler metrics, it is easy to prove that Oeljeklaus-Toma manifolds do not admit {\itshape left-invariant} Hermitian metric of this type.
We notice here that, as opposed to the balanced, pluriclosed, or taming symplectic cases, the averaging trick on solvmanifolds does not apply for astheno-K\"ahler metrics. Moreover, we will prove that astheno-K\"ahler metrics never exist on Oeljeklaus-Toma manifolds.

This will follow by considering a more general metric condition.
We recall that {\em $k$-Gauduchon metrics} \cite{fu-wang-wu} are defined by the property $\partial\overline\partial\omega^k\wedge\omega^{n-k-1}=0$. In what follows, we prove that $k$-Gauduchon metrics of ``special type'', including astheno-K\"ahler metrics, never exist on Oeljeklaus-Toma manifolds:

\begin{proposition}\label{prop:ddcomegak}
Let $X(K, U)$ be an Oeljeklaus-Toma manifold of type $(s, t)$ and complex dimension $n\geq4$.
Fix an arbitrary integer $k$ such that $2 \leq k \leq n-2$.
Then there is no Hermitian metric $\omega$ such that $\partial \overline{\partial}\omega^k=0$.
\end{proposition}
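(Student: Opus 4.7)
\textit{Proof plan.} My plan is to derive a contradiction via Stokes' theorem applied to a carefully chosen test form. Assuming $\partial\bar\partial\omega^k=0$, for every smooth $(n-k-1,n-k-1)$-form $\eta$ on $X$,
\[
\int_X \omega^k\wedge\partial\bar\partial\eta \;=\; \pm\int_X \partial\bar\partial\omega^k\wedge\eta \;=\; 0,
\]
so it suffices to exhibit $\eta$ for which $\omega^k\wedge\partial\bar\partial\eta$ is pointwise non-negative and nonzero somewhere.

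The fundamental closed form is
\[
\Phi \;:=\; \omega^1\wedge\bar\omega^1\wedge\cdots\wedge\omega^s\wedge\bar\omega^s,
\]
where $\omega^1,\ldots,\omega^s,\gamma^1,\ldots,\gamma^t$ denotes the left-invariant coframe of Section~\ref{sec:ot-solvmanifold}. The structure equation $d\omega^i=\tfrac{\sqrt{-1}}{2}\omega^i\wedge\bar\omega^i$ makes each factor $\omega^i\wedge\bar\omega^i$ (and thus $\Phi$) $d$-closed, and $\Phi$ is pointwise non-negative. Since $\Phi\wedge(\omega^i-\bar\omega^i)=0$ for all $i\in\{1,\ldots,s\}$, and $d\gamma^j$ lies in the span of the $\omega^i-\bar\omega^i$ wedged with $\gamma^j$, the product $\Phi\wedge\gamma^M\wedge\bar\gamma^M$ remains $d$-closed for every $M\subset\{1,\ldots,t\}$.

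For $2\leq k\leq t$, choose $M\subset\{1,\ldots,t\}$ with $|M|=t-k$ and distinct indices $i_1,i_2\in\{1,\ldots,s\}$, and set
\[
\eta \;:=\; \omega^{\{1,\ldots,s\}\setminus\{i_1\}}\wedge\bar\omega^{\{1,\ldots,s\}\setminus\{i_2\}}\wedge\gamma^M\wedge\bar\gamma^M.
\]
A direct calculation with the structure equations yields
\[
\partial\bar\partial\eta \;=\; \pm\tfrac{1}{4}(1+B_{i_1,M})(1+B_{i_2,M})\,\Phi\wedge\gamma^M\wedge\bar\gamma^M, \qquad B_{j,M}:=\sum_{m\in M}b_{j,m}.
\]
The arithmetic identity $\sum_{i=1}^t b_{j,i}=-1$---which comes from totally positive units having norm $\prod_j\sigma_j(u)=1$---combined with the freedom in choosing $M,i_1,i_2$ ensures that the scalar factor is nonzero. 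The form $\omega^k\wedge\Phi\wedge\gamma^M\wedge\bar\gamma^M$ is then strictly positive pointwise: wedging with $\Phi$ kills every component of $\omega$ involving $\omega^i$ or $\bar\omega^i$, so $\omega^k\wedge\Phi=\omega_\gamma^k\wedge\Phi$ where $\omega_\gamma$ is the $\gamma$-block (a positive Hermitian form by positive-definiteness of $\omega$), and the $k$ vertical factors of $\omega_\gamma^k$ exactly fill in the missing $\gamma^j\wedge\bar\gamma^j$ for $j\notin M$. This contradicts Stokes.

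The complementary range $t<k\leq n-2$ is treated by the dual construction: in place of $\Phi$ one uses a partial product $\Phi_I:=\prod_{i\in I}(\omega^i\wedge\bar\omega^i)$ with $|I|=n-k\geq 2$ (which remains $\partial\bar\partial$-exact because $|I|\geq 2$), wedged with $\gamma^{\{1,\ldots,t\}}\wedge\bar\gamma^{\{1,\ldots,t\}}$ and an analogous displaced coframe block. The main technical obstacle is the combinatorial control of the scalar correction factor, guaranteeing it stays nonzero across all cases, and the treatment of $s=1$, where $\Phi=\omega^1\wedge\bar\omega^1$ is not $\partial\bar\partial$-exact and one must construct a test form directly in the $\gamma$-sector (e.g., from $\gamma^{M_1}\wedge\bar\gamma^{M_2}$ with $M_1\neq M_2$).
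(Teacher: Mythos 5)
Your overall strategy---pairing $\omega^k$ against a $\partial\bar\partial$-exact form of constant sign and invoking Stokes---is exactly the paper's, and your ``dual'' range $t<k\leq n-2$ (where one uses strongly positive $\partial\bar\partial$-exact $(n-k,n-k)$-forms built purely from the $\omega^i\wedge\bar\omega^i$, possible since $2\leq n-k\leq s$) does match the paper's first case, modulo the fact that your description is dimensionally garbled ($\Phi_I\wedge\gamma^{\{1,\dots,t\}}\wedge\bar\gamma^{\{1,\dots,t\}}$ has bidegree $(n-k+t,n-k+t)$, not $(n-k,n-k)$) and that it requires $s\geq 2$. The genuine gap is in the range $2\leq k\leq t$: the assertion that the identity $\sum_{i=1}^t b_{j,i}=-1$ ``combined with the freedom in choosing $M,i_1,i_2$ ensures that the scalar factor is nonzero'' is precisely the hard part of the theorem, and it is not true at the level of generality at which you argue. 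For instance, with $s=2$, $t=3$, $k=2$ (so $|M|=1$), the data $(b_{1,1},b_{1,2},b_{1,3})=(-1,-1,1)$ and $(b_{2,1},b_{2,2},b_{2,3})=(5,-5,-1)$ satisfy $\sum_i b_{j,i}=-1$ for $j=1,2$, yet for every singleton $M=\{m\}$ one of $1+B_{1,M}$, $1+B_{2,M}$ vanishes, so every test form of your proposed shape dies. Hence no argument using only the norm relation and ``freedom of choice'' can close this case; one must show that such degenerate coefficient patterns cannot arise from an admissible unit group.

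This is what the paper's proof actually does, and it needs two ingredients you do not supply. First, instead of a single test form, it averages $\omega^k$ over the solvmanifold to obtain a left-invariant, strictly positive, $\partial\bar\partial$-closed $(k,k)$-form $\omega_0$, whose contractions against $\omega_\ell\wedge\bar\omega_h\wedge\gamma_I\wedge\bar\gamma_I$ yield the \emph{separate} conditions $B_{\ell,I}(B_{\ell,I}+1)=0$ and $B_{\ell,I}B_{h,I}=0$ for \emph{all} $\ell,h$ and all $I$ with $|I|=k$; your single form only yields the much weaker disjunction $(1+B_{i_1,M})(1+B_{i_2,M})=0$ for each choice. Second, a nontrivial combinatorial analysis of the system $B_{\ell,I}\in\{0,-1\}$ forces each row $b_{\ell,\cdot}$ to be $-1$ in exactly one slot and $0$ elsewhere; for $s\geq2$ this is then contradicted by the off-diagonal condition, but for $s=1$ the surviving configuration corresponds exactly to $\sigma_1(u)|\sigma_{1+i_0}(u)|^2\equiv 1$ and $|\sigma_{1+i}(u)|\equiv 1$ for $i\neq i_0$, which is consistent with every linear-algebraic constraint and is excluded only by the number-theoretic Theorem \ref{unitthm}. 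Your sketch for $s=1$ (``construct a test form directly in the $\gamma$-sector'') omits this entirely; without an input of the strength of Theorem \ref{unitthm}, the $s=1$ case cannot be finished.
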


\begin{proof}
Consider the co-frame of left-invariant $(1,0)$-forms given by $\{\omega^1, \ldots, \omega^s, \gamma^1, \ldots, \gamma^t\}$, as presented in Section \ref{sec:ot-solvmanifold}.

\begin{description}
\item[Case $s \geq 2$ and $t \leq k \leq n-2$]
Consider the $(1, 1)$-form $\eta := - \sqrt{-1} \sum^{s}_{k, \ell=1}\omega^k \wedge \overline{\omega}^\ell$.
Following \eqref{eq:struct-eq}, we get $\partial\overline\partial\eta = \frac{\sqrt{-1}}{2} \sum_{k, \ell} \omega^k \wedge \bar{\omega}^k \wedge \omega^\ell \wedge \bar{\omega}^\ell$, which is a non-zero, strongly positive form of type $(2, 2)$.
For $I=\{i_1, \ldots ,i_{n-k-2}\}$, denote by $\omega_I := \omega_{i_1} \wedge \cdots \wedge \omega_{i_{n-k-2}}$.
Let $$\omega_0:=\sum_{|I|=n-k-2} \omega_I \wedge \overline{\omega}_I \wedge \eta.$$
Then $\partial\overline\partial \omega_0=\frac{\sqrt{-1}}{2} \sum_{|I|=n-k} \omega_I \wedge \overline{\omega}_I$ is a strongly positive $(n-k, n-k)$ form.
If $\omega$ such that $\partial\overline{\partial}\omega^k=0$ exists, we would get the following contradiction:
\begin{equation}\label{ineq}
0 < \int_X \partial\overline\partial \omega_0 \wedge \omega^k = \int_X \omega_0 \wedge \partial\overline\partial\omega^k=0.
\end{equation}
\item[Case $s=1$]
Let us fix $1 \leq k \leq n-2$. We shall prove there exists a positive $\sqrt{-1}\partial\overline\partial$-exact $(n-k, n-k)$-form. To this end, we claim there exist $1 \leq i_1 < \ldots < i_{n-k-1} \leq t$ such that 
$$\begin{gathered}
\rho := (b_{1,i_1}+\ldots + b_{1,i_{n-k-1}})(b_{1,i_1}+\ldots + b_{1,i_{n-k-1}}+1) \\ \cdot \sqrt{-1}^{n-k-1}\gamma_{i_1} \wedge \overline{\gamma}_{i_1} \wedge \ldots \gamma_{i_{n-k-1}} \wedge \overline{\gamma}_{i_{n-k-1}}\end{gathered}$$
satisfies $\sqrt{-1}\partial \overline{\partial}\rho \geq 0$ and it is not $0$. Using the structure equations given in \eqref{eq:struct-eq}, one can easily compute that:
$$\begin{gathered}
\sqrt{-1}\partial \overline{\partial} \rho= \frac{1}{4} (b_{1,i_1}+\ldots + b_{1,i_{n-k-1}})^2(b_{1,i_1}+\ldots + b_{1,i_{n-k-1}}+1)^2 \\
\cdot \sqrt{-1}^{n-k}\omega \wedge \overline{\omega} \wedge \bigwedge_{\ell=1}^{n-k-1} \gamma_{i_\ell} \wedge \overline{\gamma}_{i_\ell}.
\end{gathered}$$
 If there was no choice of $\{i_1, \ldots, i_{n-k-1}\}$ such that $\sqrt{-1}\partial \overline{\partial} \rho$ is not $0$, it would further imply that for any subset $\{i_1, \ldots, i_{n-k-1}\} \subset \{1, \ldots, t\}$, we have 
 \begin{equation}\label{sum}
 b_{1,i_1}+\ldots + b_{1,i_{n-k-1}} \in \{0, -1\}.
 \end{equation}
 Assume for now that this is the case. If $n-k-1 \in \{1, 2\}$, it is straightforward to see that \eqref{sum} combined with the fact that $\sum^t_{j=1}b_{1,j}=-1$ amounts to $b_{1,i_0}=-1$ for some $1 \leq i_0 \leq t$ and $b_{1,i}=0$ for $i \neq i_0$. If $t>1$, this further leads to $\sigma_1(u) \sigma_{1+i_0}(u) \overline{\sigma}_{1+i_0}(u) \equiv 1$ and $|\sigma_{1+i}(u)| \equiv 1$, for any $i \neq i_0$, which, however, is impossible, due to Theorem \ref{unitthm}.

 Let us suppose now that $n-k-1 \geq 3$. If there exist $i_m, i_p, i_\ell$ such that $b_{1,i_m} \neq b_{1,i_p}$, $b_{1,i_m} \neq b_{1,i_\ell}$ and $b_{1,i_\ell} \neq b_{1,i_p}$, then, since $n-k-1<t$, one can take $\{j_1, \ldots j_{n-k-3}\} \in \{1, \ldots, t\} \setminus \{i_m, i_\ell, i_p\}$ and therefore, the sums $$b_{1,i_m}+b_{1,i_\ell}+\sum^{n-k-3}_{t=1}b_{1,j_t}, \>\> b_{1,i_p}+b_{1,i_\ell}+\sum^{n-k-3}_{t=1}b_{1,j_t} \>\> \text{and} \>\> b_{1,i_m}+b_{1,i_p}+\sum^{n-k-3}_{t=1}b_{1,j_t}$$ are all different and cannot attain only the values $0$ and $-1$. Moreover, it is impossible to have $b_{1,1}= \ldots = b_{1,t}$, since this would contradict \eqref{sum} and $\sum^t_{j=1}b_{1,j}=-1$. Therefore, $b_{1,j}$, for any $1 \leq j \leq t$, can attain precisely two values. Let $m<M$ be these two values. Let $x$ denote the number of $b_{1,j}$ that are equal to $m$ and $y$ the number of $b_{1,j}$ that are equal to $M$. Then $x+y=t$ and $x \cdot m+y \cdot M=-1$. If $x < n-k-1$ and $y<n-k-1$, then we must have $2 \leq x$ and $2 \leq y$, since $n-k-1<t$. However, we cannot have the situation $x=y=2$, because this leads to the contradiction $2(m+M)=-1$, $m+2M=0$ and $2m+M=-1$. Therefore, in both cases $x \geq 3$ and $y \geq 3$, we can construct three different sums that cannot cover only the two values $0$ and $-1$. Finally, this gives that necessarily $x \geq n-k-1$ or $y \geq n-k-1$. Nevertheless, $x \geq n-k-1$ leads easily to a contradiction with \eqref{sum} and the case $y \geq n-k-1$ is the only one not conflicting with \eqref{sum} and giving $b_{1,i_0}=-1$ for precisely one $1 \leq i_0 \leq t$ and $b_{1,j}=0$ for $j \neq i_0$. If $t>1$, we further have again $\sigma_1(u) \sigma_{1+i_0}(u) \overline{\sigma}_{1+i_0}(u) \equiv 1$ and $|\sigma_{1+i}(u)| \equiv 1$, for any $i \neq i_0$, which is impossible by Theorem \ref{unitthm}. 
 
 The contradiction stems therefore from assumption \eqref{sum}, which means that there exists a choice of $\{i_1, \ldots, i_{n-k-1}\}$ such that $\sqrt{-1}\partial\overline{\partial} \rho$ is non-zero and positive. From here we argue like in \eqref{ineq} to deduce the inexistence of a Hermitian metric $\omega$ with $\sqrt{-1}\partial\overline{\partial} \omega^k=0$.
 
 \item[Case $s \geq 2$ and $2\leq k<t$]
Assume there exists a Hermitian metric $\omega$ such that $\partial \overline{\partial}\omega^k=0$. Define now 
\begin{equation*}
    \omega_0 (X_1, \ldots, X_{2k}):= \int_X \omega^k (X_1, \ldots, X_{2k}) d\mathrm{vol},
\end{equation*}
where $X_1, \ldots, X_{2k}$ are any left-invariant vector fields and $d\mathrm{vol}$ is a bi-invariant volume form. Then $\omega_0$ is a strictly positive left-invariant $(k, k)$-form that is $\partial\overline{\partial}$-closed. Note that $\omega_0$ is not necessarily the $k$-th power of a strictly positive $(1, 1)$-form.
We consider the frame $\mathcal{B}:=\left(\omega^1, \ldots, \omega^s, \gamma^1, \ldots, \gamma^t\right)$ and we denote by $\alpha^j$ its elements and by $\overline\beta^j$ their conjugates; we also denote by $\left(\omega_1, \ldots, \omega_s, \gamma_1, \ldots, \gamma_t\right)$ the dual basis. We have a decomposition of $\omega_0$ over the basis $\mathcal{B}$ as follows:
\begin{equation*}
    \omega_0 = \sum_{|I|=k, |J|=k} A (\alpha^I, \overline\beta^J)\alpha^{i_1} \wedge \overline\beta^{j_1} \wedge \ldots \wedge \alpha^{i_k} \wedge \overline\beta^{j_k},
\end{equation*}
where $A (\alpha^I, \overline{\beta}^J) \in \mathbb{C}$.
Since $\omega_0$ is strictly positive, we get that, for any $I:=\{i_1, \ldots i_k\} \subseteq \{1, \ldots t\}$, it holds $A(\gamma^I, \overline{\gamma}^I)>0$, where $\gamma^I:= \gamma_{i_1} \wedge \overline{\gamma}_{i_1}\wedge \cdots \wedge \gamma_{i_k} \wedge \overline{\gamma}_{i_k}$.
A straightforward computation using \eqref{eq:struct-eq} shows that, for any $I \subseteq \{1, \ldots, t\}$ and $h \neq \ell$,
\begin{align}\label{firsteq}
    0&=i_{\omega_\ell \wedge \overline{\omega}_h \wedge\gamma_I \wedge \overline{\gamma}_I} \partial \overline{\partial}\omega_0 = A(\gamma^I, \overline{\gamma}^I)i_{\omega_\ell \wedge \overline{\omega}_h \wedge\gamma_I \wedge \overline{\gamma}_I} \partial \overline{\partial} (\gamma^I \wedge \overline{\gamma}^I) \nonumber\\
    &= \frac{1}{4}A(\gamma^I, \overline{\gamma}^I)(b_{\ell,i_1}+\cdots+b_{\ell,i_k})(b_{h,i_1}+\cdots+b_{h,i_k})
\end{align}
and
\begin{align*}
    0&=i_{\omega_\ell \wedge \overline{\omega}_\ell \wedge\gamma_I \wedge \overline{\gamma}_I} \partial \overline{\partial}\omega_0 = A(\gamma^I, \overline{\gamma}^I)i_{\omega_\ell \wedge \overline{\omega}_\ell \wedge\gamma_I \wedge \overline{\gamma}_I} \partial \overline{\partial} (\gamma^I \wedge \overline{\gamma}^I)\\
    &= \frac{1}{2}A(\gamma^I, \overline{\gamma}^I)(b_{\ell,i_1}+\cdots+b_{\ell,i_k})(b_{\ell,i_1}+\cdots+b_{\ell,i_k}+1).
\end{align*}

This further implies that $b_{\ell,i_1}+\cdots+b_{\ell,i_k} \in \{0, -1\}$, for any $1 \leq \ell \leq s$ and for any $\{i_1, \ldots, i_k\} \subseteq \{1, \ldots, t\}$. Following now the same argument presented in the previous case, we obtain that for any $1 \leq \ell \leq s$, there exists $1 \leq j_\ell \leq t$ such that $b_{\ell,j_\ell}=-1$ and $b_{\ell,j}=0$, for any $j \neq j_\ell$. 
We take now $\ell \neq h$ and choose $\{i_1, \ldots, i_k\}$ such that $j_\ell, j_h \in \{i_1, \ldots, i_k\}$. This is possible since $2 \leq k$. Then we further have $(b_{\ell,i_1}+\cdots+b_{\ell,i_k})(b_{h,i_1}+\cdots+b_{h,i_k})=-1$, which is not possible by \eqref{firsteq}.
Therefore, there exists no metric $\omega$ such that $\partial \overline{\partial} \omega^k=0$, with $2 \leq k<t$.
\qedhere
\end{description}
\end{proof}

By Proposition \ref{prop:ddcomegak} for $k=n-2$, we immediately get the following

\begin{corollary}\label{cor:ak}
Oeljeklaus-Toma manifolds of complex dimension $n\geq4$ never admit astheno-K\"ahler metrics.
\end{corollary}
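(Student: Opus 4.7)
The plan is to derive this as an immediate specialization of Proposition \ref{prop:ddcomegak} at the exponent $k = n-2$. An astheno-K\"ahler metric on a complex $n$-manifold is, by definition, a Hermitian metric whose associated $(1,1)$-form $\omega$ satisfies $\partial\overline{\partial}\omega^{n-2} = 0$. The hypothesis $n \geq 4$ is precisely what ensures $k := n-2 \geq 2$, placing the pair $(k, n)$ in the admissible range $2 \leq k \leq n-2$ of Proposition \ref{prop:ddcomegak}. That proposition rules out the existence of any Hermitian metric satisfying $\partial\overline{\partial}\omega^k = 0$ in that range, so the corollary follows.

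There is no separate obstacle here: the entire argument reduces to checking that the range of the previous proposition covers this value of $k$. All the genuine content, including the case analysis in $(s,t)$, the construction of a strongly positive $\partial\overline{\partial}$-exact form via the structure equations \eqref{eq:struct-eq}, the Stokes pairing against $\omega^{k}$, and the appeal to Theorem \ref{unitthm} to exclude the exceptional unit configurations, is already packaged in Proposition \ref{prop:ddcomegak}. It is worth noting that the third case of that proposition ($s \geq 2$ and $2 \leq k < t$) is not even needed here: whenever $s \geq 2$ one has $t = n - s \leq n - 2 = k$, so the first case applies, while if $s = 1$ the second case applies. Hence the first two regimes of Proposition \ref{prop:ddcomegak} already exhaust all Oeljeklaus–Toma manifolds of complex dimension $n \geq 4$ at the astheno-K\"ahler exponent.
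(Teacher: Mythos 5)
Your proposal is correct and matches the paper's own proof, which likewise obtains the corollary as the immediate specialization $k=n-2$ of Proposition \ref{prop:ddcomegak}, with $n\geq 4$ guaranteeing $k\geq 2$. Your additional observation that only the first two cases of the proposition are needed at this exponent is accurate but not required.
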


We further investigate the existence of {\it strongly Gauduchon metrics}, which are by definition Hermitian metrics $\omega$ such that $\partial \omega^{n-1}$ is $\overline{\partial}$-exact. They were introduced in \cite{pop13} and possess the remarkable property of openness under complex analytic deformation. One special subclass of such metrics are the balanced ones. We prove the following:

\begin{proposition}\label{prop:strongly-gauduchon}
Oeljeklaus-Toma manifolds $X(K, U)$ of any type do not admit strongly Gauduchon metrics.
\end{proposition}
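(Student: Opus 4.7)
The plan is to detect the obstruction to being strongly Gauduchon by pairing $\partial\Omega^{n-1}$ against a carefully chosen $\overline{\partial}$-closed $(0,1)$-form coming from the left-invariant structure of Section \ref{sec:ot-solvmanifold}. For any fixed $k\in\{1,\ldots,s\}$, the $(1,0)$-form $\omega^k$ of \eqref{eq} descends to a globally defined, nowhere-vanishing form on $X(K,U)$. The structure equations \eqref{eq:struct-eq} show that $d\omega^k=\frac{\sqrt{-1}}{2}\,\omega^k\wedge\bar\omega^k$ is of pure type $(1,1)$; taking complex conjugates, $d\bar\omega^k$ is also of type $(1,1)$, so
\[
\overline{\partial}\,\bar\omega^k=0,\qquad \partial\bar\omega^k=\tfrac{\sqrt{-1}}{2}\,\omega^k\wedge\bar\omega^k.
\]

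Suppose now, for contradiction, that $\Omega$ is the $(1,1)$-form of a strongly Gauduchon metric, i.e. $\partial\Omega^{n-1}=\overline{\partial}\eta$ for some $(n,n-2)$-form $\eta$. The strategy is to compute the integral
\[
I:=\int_X \bar\omega^k\wedge\partial\Omega^{n-1}
\]
in two different ways. First, substituting $\partial\Omega^{n-1}=\overline{\partial}\eta$, using $\overline{\partial}\bar\omega^k=0$, and applying Stokes' theorem to the $(n,n-1)$-form $\bar\omega^k\wedge\eta$ (on which $\overline{\partial}$ coincides with $d$ for bidegree reasons), one gets $I=0$. Second, integration by parts on the $(n-1,n)$-form $\bar\omega^k\wedge\Omega^{n-1}$ — again using that $d=\partial$ on this bidegree, and that the only $(n,n)$-contribution of $\bar\omega^k\wedge d\Omega^{n-1}$ comes from $\bar\omega^k\wedge\partial\Omega^{n-1}$ — yields
\[
I=\int_X \partial\bar\omega^k\wedge\Omega^{n-1}=\tfrac{1}{2}\int_X\bigl(\sqrt{-1}\,\omega^k\wedge\bar\omega^k\bigr)\wedge\Omega^{n-1}.
\]
Since $\omega^k$ is nowhere vanishing and $\Omega$ is positive, the integrand is a strictly positive $(n,n)$-form, hence $I>0$, contradicting $I=0$.

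The key insight — and the only genuinely non-obvious step — is the choice of test form: the left-invariant $(0,1)$-forms $\bar\omega^k$ are simultaneously $\overline{\partial}$-closed (thanks to the pure bidegree of the torsion in \eqref{eq:struct-eq}) and have $\partial\bar\omega^k=\tfrac{\sqrt{-1}}{2}\omega^k\wedge\bar\omega^k$ a non-vanishing semipositive $(1,1)$-form. Together these two features force the numerical obstruction above. The remaining manipulations are routine applications of Stokes' theorem, and no further structural input about $K$ or $U$ is needed — the argument works for every type $(s,t)$.
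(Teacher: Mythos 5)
Your proof is correct and is essentially the paper's own argument: the paper tests $\partial\omega^{n-1}$ against $\tau=\tfrac12\sum_{i=1}^s\bar\omega^i$ rather than a single $\bar\omega^k$, but the mechanism is identical — $\overline\partial\tau=0$, $\partial\tau$ is a nonzero (strongly) positive $(1,1)$-form, and Stokes forces the pairing with $\Omega^{n-1}$ to be simultaneously zero and strictly positive. The choice of a single $\bar\omega^k$ versus the sum is immaterial.
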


\begin{proof}
Assume there exists a strongly Gauduchon metric $\omega$. Then, by definition, there exists an $(n, n-2)$-form $\alpha$ such that $\partial \omega^{n-1}=\overline{\partial} \alpha$. Let $\tau:= \frac{1}{2} \sum_{i=1}^s \overline{\omega}^i$, with the notation of \eqref{eq}. By Stokes, we have the following:
\begin{equation}\label{sG}
   \int_{X} \overline{\partial}\tau \wedge \alpha = \int_{X} \tau \wedge \overline{\partial} \alpha = \int_{X} \tau \wedge \partial \omega^{n-1} =  \int_{X} \partial \tau \wedge \omega^{n-1}.
\end{equation}

However, by \eqref{eq:struct-eq}, $\overline{\partial}\tau=0$ and $\partial \tau$ is a (strongly) positive $(1, 1)$-form. Then the first term of \eqref{sG} vanishes, but the last term is strictly positive and hence, there cannot exist any strongly Gauduchon metric on $X(K, U)$.
\end{proof}

\section{Cohomological properties of Oeljeklaus-Toma manifolds}\label{sec:cohom}

In what follows, we collect what is known about the cohomologies of Oeljeklaus-Toma manifolds, after Oeljeklaus, Toma, Tomassini, Torelli, Kasuya, Istrati, and the third-named author.
We use the following notation: for any multi-index $I=(1\leq i_1<\cdots<i_m\leq n)$ of length $|I|=m$, we consider $\sigma_I \colon U \to \mathbb C^{*}$ the representation $\sigma_I(u):=\sigma_{i_1}(u)\cdot\cdots\cdot\sigma_{i_m}(u)$. We denote by $\rho_m$ the number of non-trivial relations among the representations $\sigma_i$ with fixed length $m$, respectively, by $\rho_{p,m}$ the number of non-trivial relations among the $\sigma_i$ with fixed contribution from real and complex parts:
\begin{eqnarray}
\rho_m&:=&\sharp\left\{I : |I|=m, \sigma_{I}=1\right\} ,\label{eq:rho-dr}\\
\rho_{p,m}&:=&\sharp\left\{I\subseteq\{1,\ldots,s+t\}, J\subseteq\{s+1,\ldots,s+t\} : |I|=p, |J|=m, \sigma_{I}\bar\sigma_{J}=1\right\} . \label{eq:rho-dolb}
\end{eqnarray}

With this notation, we have the following:
\begin{theorem}[{\cite{OT, io19, k20, ot21}}]\label{thm:ot-cohom}
Let $X(K, U)$ be an Oeljeklaus-Toma manifold of type $(s, t)$.
\begin{itemize}
\item The Betti numbers are computed in \cite[Theorem 3.1]{io19}:
$$ b_k=\sum_{\ell+m=k} {s \choose \ell} \cdot \rho_m .$$
In particular:
\begin{itemize}
\item $b_1=s$ \cite[Proposition 2.3]{OT}
\item $b_2={s \choose 2}$ \cite[Proposition 2.3]{OT}, \cite[Proposition 2.4]{apv16}
\end{itemize}
\item The Hodge numbers are computed in \cite[Theorem 4.5]{ot21} and also \cite[Corollary 3.5]{k20}:
$$ h^{p,q}_{\overline\partial}=\sum_{\ell+m=q}{s \choose \ell} \rho_{p,m}. $$
In particular:
\begin{itemize}
\item $h_{\overline\partial}^{0,1}=s$ \cite[Corollary 4.6]{ot21}
\end{itemize}
\item The Hodge-Fr\"olicher spectral sequence degenerates at the first page. \cite[Theorem 4.5]{ot21}
\end{itemize}
\end{theorem}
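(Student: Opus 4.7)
The plan is to exploit the solvmanifold structure of $X=X(K,U)$ from Section~\ref{sec:ot-solvmanifold} and compute its cohomology via a finite-dimensional sub-DGA $B\subset A_X$. Since the underlying solvable group $G=\mathbb{H}^s\times\mathbb{C}^t$ is \emph{not} completely solvable (the rotation terms $c_{k,i}$ obstruct this), Nomizu's theorem does not apply, and one must work instead in Kasuya's generalized framework \cite{k13, k20}, as developed for Oeljeklaus-Toma manifolds in \cite{ot21}.

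First I would construct the candidate sub-DGA $B$ as follows. Start from the $\mathcal{O}_K$-invariant forms on $\mathbb{H}^s\times\mathbb{C}^t$ built by wedging $dw^{I_1}\wedge d\overline{w}^{I_2}\wedge dz^{J_1}\wedge d\overline{z}^{J_2}$ with real-analytic multipliers of the shape $\prod_k (\Im w^k)^{\alpha_k}\exp\bigl(\sqrt{-1}\sum_k\beta_k\log\Im w^k\bigr)$. Analyzing how the dilation $R_u$ rescales such a form, one finds that $U$-invariance can be arranged (by a suitable choice of $\alpha,\beta$) precisely when the character $\sigma_{J_1}\cdot\overline\sigma_{J_2}$ of $U$ is trivial: the eigenvalues $\sigma_k(u)$ on the $w$-directions are always absorbable into the multiplier, but those on the $z$-directions are not. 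This is exactly the condition recorded in \eqref{eq:rho-dr} and \eqref{eq:rho-dolb}. Combined with the $\binom{s}{\ell}$ Künneth-type contribution from choosing $\ell=|I_1|+|I_2|$ indices in the $w$-directions, this yields $\dim B^k=\sum_{\ell+m=k}\binom{s}{\ell}\rho_m$ and the refined formula for $\dim B^{p,q}$. The structure equations \eqref{eq:struct-eq} then show every element of $B$ is both $d$- and $\overline\partial$-closed, so these counts already equal $\dim H^k(B)$ and $\dim H^{p,q}_{\overline\partial}(B)$.

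The second and hardest step is to prove that the inclusion $B\hookrightarrow A_X$ is a quasi-isomorphism for both $d$ and $\overline\partial$. The standard strategy is to realize $X$, up to finite cover, as an iterated fibration over a solvmanifold quotient of $\mathbb{H}^s$ with complex-torus fibres, and then to run a Leray/Hochschild-Serre spectral sequence argument. The key input is that admissibility of $U$ forces it to be Zariski dense in the relevant algebraic torus, so that the cohomology of the base with coefficients in any local system of non-trivial $U$-character vanishes; only the trivial-character components survive, giving exactly the subspace $B$. This is the technical core and is where I expect the main difficulty to lie.

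Finally, degeneration of the Fr\"olicher spectral sequence at $E_1$ is immediate from the dimension count: summing $\rho_{p,m}$ over $p$ recovers $\rho_m$ (by partitioning multi-indices in \eqref{eq:rho-dr} according to their intersection with $\{s+1,\ldots,s+t\}$), so $\sum_{p+q=k}h^{p,q}_{\overline\partial}=\sum_{\ell+m=k}\binom{s}{\ell}\rho_m=b_k$, which upgrades the Fr\"olicher inequality $b_k\leq\sum_{p+q=k}h^{p,q}_{\overline\partial}$ to an equality and forces all higher differentials in the Fr\"olicher spectral sequence to vanish.
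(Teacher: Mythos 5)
The paper does not actually prove Theorem \ref{thm:ot-cohom}: it is a survey statement quoted from \cite{OT, io19, k20, ot21}, so there is no internal proof to compare against. The closest in-paper analogue of your strategy is the subalgebra $VB$ of Section \ref{sec:cohom} together with Lemmas \ref{lem: properties of VB}, \ref{lem: inclusion VB-AX quiso} and \ref{lem: Dolbeault dim OT}; there the quasi-isomorphism is obtained by exhibiting $VB$ as an orthogonal direct summand for an invariant $L^2$-metric and then quoting \cite{k20} for surjectivity, rather than by the Leray/Hochschild--Serre argument you sketch. Your overall architecture (finite-dimensional model, quasi-isomorphism, degeneration from $\sum_{p+q=k}h^{p,q}_{\overline\partial}=b_k$) is the correct one and matches the cited literature, and your last step, including the identity $\sum_{p+m=r}\rho_{p,m}=\rho_r$, is right.

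However, your first step contains genuine errors. It is not true that every element of your model is $d$- and $\overline\partial$-closed: by \eqref{eq:struct-eq} one has $d\omega^k=\frac{\sqrt{-1}}{2}\,\omega^k\wedge\bar\omega^k\neq 0$, which is of type $(1,1)$, so $\omega^k$ is neither $d$- nor $\overline\partial$-closed, and likewise $d\gamma^i\neq 0$. Consequently $\dim B^{k}$ and $\dim B^{p,q}$ do not ``already equal'' the cohomology dimensions, and on your own terms they cannot: the invariant $1$-forms of the shape you describe include all of $\omega^1,\dots,\omega^s,\bar\omega^1,\dots,\bar\omega^s$, giving dimension at least $2s$ in degree one, whereas $b_1=s$ and $h^{1,0}_{\overline\partial}=0$. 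Relatedly, the factor $\binom{s}{\ell}$ is not the number of ways of choosing $\ell=|I_1|+|I_2|$ indices among the $w$- and $\bar w$-directions (that would give $\sum_{a+b=\ell}\binom{s}{a}\binom{s}{b}$); it is the $\ell$-th Betti number, respectively the number $h^{0,\ell}_{\overline\partial}$, of the subcomplex $\Lambda\langle\omega^k,\bar\omega^k\rangle$, in which only the combinations $\omega^k-\bar\omega^k$ (respectively the $\bar\omega^k$) survive. There is also an imprecision in the invariance discussion: since $\{(\log\sigma_1(u),\dots,\log\sigma_s(u))\}$ is a full lattice, a multiplier making $dz^{K}\wedge d\bar z^{L}$ invariant exists for \emph{every} $K,L$ (this is exactly how the coframe $\gamma^i$ is built), so the arithmetic condition $\sigma_J\sigma_K\bar\sigma_L|_U\equiv1$ with $J$ a subset of real embeddings, as in \eqref{eq:V}, is not an invariance criterion but the condition selecting which invariant forms contribute to cohomology, and it is not the triviality of $\sigma_K\bar\sigma_L$ alone. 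Finally, the quasi-isomorphism step, which you correctly identify as the technical core, is left entirely as a sketch; it is precisely the content of \cite{io19} for de Rham and of \cite{k20, ot21} for Dolbeault and cannot be waved through.
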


\subsection{The $E_1$-isomorphism type and Bott-Chern cohomology}

Let $X:=X(K,U)$ be an Oeljeklaus-Toma manifold associated with the number field $K$ with $s$ real places and $t$ complex places and to the admissible subgroup $U$. Denote by $n=s+t$ its complex dimension. We order the embeddings $\sigma_i$ s.t. $\sigma_i=\bar\sigma_i$ for $i\in\{1,\dots,s\}$, and $\bar\sigma_j=\sigma_{t+j}$ for $j\in\{s+1,\dots,s+t\}$. With respect to the solvmanifold presentation of $X=G/\Gamma$ in Section \ref{sec:ot-solvmanifold}, denote the $G$-invariant $(1,0)$-forms by $\omega^k$, $\gamma^i$, for $k\in\{1,\ldots,s\}$, $i\in\{1,\ldots,t\}$.

We define a subalgebra of the algebra $A_X:=\mathcal{C}^\infty(X;\wedge^\bullet T^*X)$ of differential forms by
\[
B:=\Lambda\langle\omega^k,\bar\omega^k\rangle_{k\in\{1,...s\}}.
\]

Further consider the space
\begin{eqnarray}\label{eq:V}
V &:=& \operatorname{span}_{\mathbb{C}}\{dz^K \wedge d\bar{z}^L \mid K,L \subseteq\{1,\dots,t\}, \exists J \subseteq \{1,\ldots,s\}\text{ s.t. }\sigma_{JKL}|_U\equiv1\} \nonumber \\
&=& \operatorname{span}_{\mathbb{C}}\left\{ \exp(\Psi_{JKL}(x))\gamma^K\wedge\bar\gamma^L \mid J \subseteq \{1,\dots, s\},  K,L \subseteq\{1,\dots,t\} \text{ s.t. } \sigma_{JKL}|_U\equiv 1\right\},
\end{eqnarray}
where here we put $\sigma_{JKL}:=\prod_{j\in J}\sigma_j\prod_{k\in K}\sigma_{s+k}\prod_{\ell\in L}\sigma_{s+t+\ell}$ and
\begin{eqnarray*}
\Psi_{JKL}
&:=& -\sum_{j \in J}\log \Im w^j-\sum_{k \in K}\sum_{h=1}^s\left(\frac{1}{2}b_{h, k}-\sqrt{-1}c_{h, k}\right)\log \Im w^h\\
&&-\sum_{\ell \in L}\sum_{h=1}^s\left(\frac{1}{2}b_{h, \ell} +\sqrt{-1}c_{h, \ell}\right)\log \Im w^h \\
&=& -\sum_{j \in J}\log \Im w^j-\frac{1}{2}\sum_{k \in K}\sum_{h=1}^sb_{h, k}\log \Im w^h-\frac{1}{2}\sum_{\ell \in L}\sum_{h=1}^sb_{h, \ell}\log \Im w^h
\end{eqnarray*}
is as in \cite{k20}, and the simplification follows by $\sigma_{JKL}|_U\equiv1$. The space $V$ carries a grading by 
\[
V^r:=\operatorname{span}_{\mathbb{C}}\left\{ \exp(\Psi_{JKL}(x))\gamma_K\wedge\bar\gamma_L\in V\mid |J|+|K|=r\right\}\subseteq V.
\]

We denote by $VB$ the bigraded vector space with basis all wedges of elements in $V$ and $B$, {\itshape i.e.}
\begin{equation}\label{eq:VB}
VB:=V\wedge B\subseteq A_X.
\end{equation}
The grading of $V^r$ induces a grading on $VB$ via $V^rB:=V^r\wedge B$.

\begin{lemma}\label{lem: properties of VB}
	The subspace $VB\subseteq A_X$ is a bigraded, bidifferential subalgebra stable under conjugation. The grading of $V$ induces a direct sum decomposition $VB=\bigoplus_r V^rB$ into sub-double complexes which are also stable under conjugation.
\end{lemma}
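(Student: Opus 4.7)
The lemma decomposes into several claims for $VB$: bigradedness, the bidifferential property, stability under complex conjugation, the subalgebra property, and the direct sum decomposition $VB=\bigoplus_r V^rB$ into sub-double-complexes. The plan is to dispatch the straightforward parts first and isolate the closure under wedge product as the main obstacle.

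First, $B$ itself is manifestly a bigraded bidifferential $\mathbb{C}$-subalgebra stable under conjugation: it is freely generated as an exterior algebra by the pure-bidegree forms $\omega^k\in B^{1,0}$ and $\bar\omega^k\in B^{0,1}$, and the structure equation $d\omega^k=\tfrac{\sqrt{-1}}{2}\omega^k\wedge\bar\omega^k\in B^{1,1}$ from \eqref{eq:struct-eq} keeps $d$ inside $B$. Each generator $\exp(\Psi_{JKL})\gamma^K\wedge\bar\gamma^L$ of $V$ has pure bidegree $(|K|,|L|)$, so $V$ and hence $VB=V\wedge B$ is bigraded. For stability of $V$ under conjugation I would verify directly from the formula the identity $\overline{\Psi_{JKL}}=\Psi_{JLK}$ (the imaginary-part summands before $\sqrt{-1}c_{h,k}$ and $\sqrt{-1}c_{h,\ell}$ have opposite signs and interchange roles under $K\leftrightarrow L$); combined with $\overline{\sigma_{JKL}}=\sigma_{JLK}$ this shows $(J,L,K)$ is a witness iff $(J,K,L)$ is, so the conjugate of a generator of $V$ lies in $V$. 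The direct sum decomposition $VB=\bigoplus_rV^rB$ follows from linear independence of the $\gamma^K$-parts of the generators across distinct values of $r=|J|+|K|$, since $B$ has no $\gamma$-factor.

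For the bidifferential property, I would compute
\[
d\bigl(e^{\Psi_{JKL}}\gamma^K\wedge\bar\gamma^L\bigr)=e^{\Psi_{JKL}}\bigl(d\Psi_{JKL}+\alpha_{K,L}\bigr)\wedge\gamma^K\wedge\bar\gamma^L,
\]
with $\alpha_{K,L}\in B^1$ arising from Leibniz applied to $\gamma^K\wedge\bar\gamma^L$ together with the structure equations for $\gamma^i$. The identity $d\log\Im w^h=\tfrac{1}{2\sqrt{-1}}(\omega^h-\bar\omega^h)\in B^1$ places $d\Psi_{JKL}\in B^1$, so $dV\subseteq V\cdot B^1\subseteq VB$ and hence $d(VB)\subseteq VB$. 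Since the witness data $(J,K,L)$ is preserved by this computation, $d$ respects each $V^r$, so each $V^rB$ is a sub-double complex.

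The main obstacle is the subalgebra property $VB\wedge VB\subseteq VB$, which via $B\wedge B\subseteq B$ reduces to $V\wedge V\subseteq VB$. The wedge of two generators $v_i=e^{\Psi_{J_iK_iL_i}}\gamma^{K_i}\wedge\bar\gamma^{L_i}$ vanishes unless $K_1\cap K_2=\emptyset=L_1\cap L_2$, in which case it equals $\pm e^{\Psi_{J_1K_1L_1}+\Psi_{J_2K_2L_2}}\gamma^{K_1\cup K_2}\wedge\bar\gamma^{L_1\cup L_2}$. The crux is to show the exponent is of the form $\Psi_{J',K_1\cup K_2,L_1\cup L_2}$ for a subset $J'\subseteq\{1,\dots,s\}$; equivalently, to establish that $J_1\cap J_2=\emptyset$, in which case $J'=J_1\cup J_2$ is the unique witness for the union pair. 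Uniqueness of the witness $J$ for a good $(K,L)$-pair is a consequence of the $\mathbb{Z}$-linear independence of $\log\sigma_1|_U,\dots,\log\sigma_s|_U$ for an admissible $U$; the identity $\sigma_{K_1\cup K_2,L_1\cup L_2}=\sigma_{K_1L_1}\sigma_{K_2L_2}$ then forces $\sigma_{J'}=\sigma_{J_1}\sigma_{J_2}$ on $U$, and a combinatorial-arithmetic analysis exploiting the constraint $\sum_ib_{h,i}=-1$ (coming from $N_{K/\mathbb{Q}}(u)=1$ for totally positive units) rules out the $\sigma_j^2$-factors that would arise from a nontrivial overlap, yielding $J_1\cap J_2=\emptyset$ and $J'=J_1\cup J_2$. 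The product then lies in $V^{r_1+r_2}$, simultaneously establishing the compatibility of the wedge with the $V^r$-grading.
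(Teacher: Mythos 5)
Your treatment of the routine claims is correct and matches the paper's (much terser) argument: bigradedness and the direct sum $VB=\bigoplus_r V^rB$ from linear independence of the generators, conjugation-stability from $\overline{\Psi_{JKL}}=\Psi_{JLK}$ together with $\overline{\sigma_{JKL}}=\sigma_{JLK}$ (this is exactly the relation the paper invokes), and $d$-stability from $d\log\Im w^h=\tfrac{1}{2\sqrt{-1}}(\omega^h-\bar\omega^h)\in B^1$ plus the structure equations \eqref{eq:struct-eq}. Your reduction of the subalgebra property is also correct: since the witness $J$ of a pair $(K,L)$ is unique (by the rank-$s$ admissibility of $U$, the characters $\log\sigma_1|_U,\dots,\log\sigma_s|_U$ are linearly independent), the wedge of two generators with witnesses $J_1,J_2$ and disjoint $K$'s and $L$'s lies in $VB$ if and only if $J_1\cap J_2=\emptyset$, in which case $J'=J_1\cup J_2$ witnesses the union pair.

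The gap is at precisely the step you flag as the crux. You assert that $J_1\cap J_2=\emptyset$ follows from ``a combinatorial-arithmetic analysis exploiting the constraint $\sum_i b_{h,i}=-1$,'' but you never give that analysis, and the norm constraint alone cannot deliver it. Taking absolute values in $\sigma_{J_aK_aL_a}|_U\equiv1$ gives, for each $h$, the identity $[h\in J_a]+\tfrac12\sum_{k\in K_a}b_{h,k}+\tfrac12\sum_{\ell\in L_a}b_{h,\ell}=0$; a common element $h\in J_1\cap J_2$ only forces $\tfrac12\sum_{k\in K_1\cup K_2}b_{h,k}+\tfrac12\sum_{\ell\in L_1\cup L_2}b_{h,\ell}=-2$, which is perfectly compatible with $\sum_{k=1}^t b_{h,k}=-1$ because the $b_{h,k}$ need not be nonpositive (e.g.\ $b_{h,1}=b_{h,2}=-1$ with the remaining $b_{h,k}$ summing to $+1$). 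What actually has to be excluded is a genuine arithmetic configuration: for instance, a single relation $\sigma_J\sigma_K\bar\sigma_L|_U\equiv1$ with $J\neq\emptyset$ and $K\cap L=\emptyset$ already breaks closure, since wedging the corresponding generator with its conjugate (which has the same witness $J$) produces the factor $\sigma_J^2$, which no subset $J'$ can witness. Ruling such relations out requires number-theoretic input about which multiplicative relations among the embeddings can coexist on $U$ (in the spirit of Theorem \ref{unitthm}), not bookkeeping with the $b_{h,i}$. For comparison, the paper's own proof simply declares the subalgebra property ``clear from the definitions'' and does not engage with this point; but since you explicitly identify it as the main obstacle, your proposal must either supply the missing arithmetic argument or restrict the claim to what the rest of the paper actually uses (that $VB$ and each $V^rB$ are conjugation-stable sub-double complexes).
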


\begin{proof}
	The statements that the space $VB$ is a bigraded subalgebra of $A_X$ and that $VB=\bigoplus V^rB$ as vector spaces are clear from the definitions. From the equations for the differentials, see \eqref{eq:struct-eq}, it follows that $VB$ and $V^rB$ are sub-double complexes. It only remains to show the statement about real structures, which follows from the relation $\sigma_{JKL}=\bar\sigma_{JLK}$.
\end{proof}

\begin{lemma}\label{lem: inclusion VB-AX quiso}
	The inclusion $\iota:VB\hookrightarrow A_X$ is an $E_1$-isomorphism.
\end{lemma}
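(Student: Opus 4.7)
The plan is to establish the $E_1$-isomorphism by proving separately that $\iota$ induces isomorphisms on $\overline\partial$- and $\partial$-cohomology, deducing the second from the first via the conjugation symmetry of $VB$ (Lemma \ref{lem: properties of VB}).

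For the $\overline\partial$-quasi-isomorphism, the key input is the explicit computation of the Dolbeault cohomology of $X$ in \cite[Corollary 3.5]{k20} and \cite[Theorem 4.5]{ot21}. These references produce representatives for every class in $H^{p,q}_{\overline\partial}(X)$ of the shape $\exp(\Psi_{JKL})\gamma^K\wedge\overline\gamma^L\wedge\overline\omega^{J'}$ with $\sigma_{JKL}|_U\equiv 1$, whose spans realize the Hodge numbers listed in Theorem \ref{thm:ot-cohom}. All such representatives lie in the $\overline\partial$-closed subspace $V\wedge\overline B\subseteq VB$, where $\overline B:=\Lambda_{\mathbb C}\langle\overline\omega^k\rangle_{k=1,\dots,s}$, so the composition $V\wedge\overline B\hookrightarrow VB\hookrightarrow A_X$ is already a $\overline\partial$-quasi-isomorphism. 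It therefore suffices to verify that the first inclusion $V\wedge\overline B\hookrightarrow VB$ is itself a $\overline\partial$-quasi-isomorphism.

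To see this, the structure equations \eqref{eq:struct-eq} imply that $\overline\partial$ preserves the $\omega^k$-degree: indeed $\overline\partial\overline\omega^k=0$, the relation $\overline\partial\omega^k=\tfrac{\sqrt{-1}}{2}\omega^k\wedge\overline\omega^k$ retains the $\omega^k$-factor, and the $\overline\partial$-images of $\gamma^i$, $\overline\gamma^i$ and $\exp(\Psi_{JKL})$ introduce only $\overline\omega$-factors. Thus $VB$ decomposes as a direct sum of $\overline\partial$-subcomplexes
\[
VB\ =\ \bigoplus_{I\subseteq\{1,\dots,s\}} V\wedge\omega^I\wedge\overline B,
\]
the summand $I=\emptyset$ being $V\wedge\overline B$. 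For $I\neq\emptyset$, multiplication by $\omega^I$ identifies the summand with $V\wedge\overline B$ equipped with a deformed differential $\overline\partial+L_{\eta_I}$, where $L_{\eta_I}$ is wedge-multiplication by the non-zero element $\eta_I:=\tfrac{\sqrt{-1}}{2}\sum_{j\in I}\pm\overline\omega^{i_j}$. A Koszul-type acyclicity argument, using a contracting homotopy related to the interior product $\iota_{\overline\omega^{j_0}}$ for some $j_0\in I$ which inverts the leading operator $L_{\overline\omega^{j_0}}$, then shows that these summands are acyclic, so cohomology concentrates on $I=\emptyset$.

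For the $\partial$-quasi-isomorphism, I would invoke that both $VB$ and $A_X$ are stable under complex conjugation and that $\iota$ is conjugation-equivariant (Lemma \ref{lem: properties of VB}). Since complex conjugation is a $\mathbb C$-antilinear isomorphism of double complexes exchanging bidegrees $(p,q)\leftrightarrow(q,p)$ and differentials $\partial\leftrightarrow\overline\partial$, the $\overline\partial$-quasi-isomorphism statement for $\iota$ converts directly into the corresponding $\partial$-quasi-isomorphism statement. The main technical obstacle lies in the Koszul acyclicity step for $I\neq\emptyset$: since $\overline\partial$ itself raises the $\overline\omega$-degree and interacts non-trivially with the deformation $L_{\eta_I}$, writing out an honest contracting homotopy, rather than just a leading-order one, will require some care. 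An alternative route, bypassing the homotopy construction, is to compute $\dim H^{p,q}_{\overline\partial}(VB)$ directly from the explicit description of $VB$ and match it against the Hodge numbers of \cite[Theorem 4.5]{ot21}, combining with surjectivity from the explicit representatives to conclude.
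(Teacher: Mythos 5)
Your overall architecture is sound and partially overlaps with the paper's: both arguments obtain surjectivity of $H_{\overline\partial}(VB)\to H_{\overline\partial}(X)$ from the fact that Kasuya's subcomplex (your $V\wedge\overline B$, his $B_\Lambda$) sits inside $VB$ and already computes the Dolbeault cohomology of $X$ \cite{k20}, and both reduce the row-cohomology statement to the column one via the conjugation-stability of $VB$. The divergence is in how injectivity is obtained. The paper sidesteps any computation inside $VB$: it equips $A_X$ with the $L^2$-inner product of the invariant metric $h=\sum_i\omega_i\cdot\bar\omega_i+\sum_i\gamma_i\cdot\bar\gamma_i$, observes that the standard basis of $VB$ is orthonormal and that $VB$ is closed under $d$ and $d^*$, and concludes that orthogonal projection $r\colon A_X\to VB$ is a bigraded chain retraction with $r\circ\iota=\Id$; this exhibits $H_{\overline\partial}(VB)$ as a direct summand of $H_{\overline\partial}(X)$, so injectivity is free.

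You instead decompose $VB=\bigoplus_I V\wedge\omega^I\wedge\overline B$ by $\omega$-multidegree (this decomposition into $\overline\partial$-subcomplexes is correct, by the structure equations) and claim the summands with $I\neq\emptyset$ are $\overline\partial$-acyclic. That claim is true, but your proof of it is not complete, and you concede as much: the contraction $\iota_{\bar\omega^{j_0}}$ inverts only the leading term $L_{\eta_I}$ of the twisted differential, while its graded commutator with $\overline\partial$ is a nonzero degree-zero operator (it acts diagonally on the basis elements $\exp(\Psi_{JKL})\gamma^K\wedge\bar\gamma^L\wedge\bar\omega^{J'}$ with eigenvalues built from the structure constants $b_{h,k}$, $c_{h,k}$), so $[\overline\partial+L_{\eta_I},\iota_{\bar\omega^{j_0}}]$ equals $\pm\tfrac{\sqrt{-1}}{2}\Id$ plus a diagonal operator whose invertibility is precisely the nontrivial content one would still have to verify --- essentially re-deriving the vanishing statements of \cite{k20} rather than quoting them. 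One also cannot filter the problem away, since $\overline\partial$ and $L_{\eta_I}$ both raise the $\bar\omega$-degree by one on this subspace. So this step is a genuine gap, not a routine verification; your fallback of matching dimensions against \cite{ot21} is likewise not carried out and would require computing $H_{\overline\partial}(VB)$, which is the very thing at issue. The retraction via orthogonal projection is the missing idea that closes the injectivity half without any such computation.
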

\begin{proof}
The statement will follow from the following steps.

{\em Step 1: $VB$ is a direct summand in $A_X$.} We show the existence of a map of double complexes $r:A_X\to VB$ s.t. $r\circ \iota=\Id$. Namely, consider the left-invariant Hermitian metric  
	\[
	h:=\sum_{i=1}^s \omega_i\cdot\bar\omega_i+\sum_{i=1}^t \gamma_i\cdot\bar\gamma_i.
	\]
	We obtain an induced $L^2$-metric on $A_X$. The basis for $VB$, given by all elements of the type $\exp(\Psi_{JKL}(x))\omega_H\bar\omega_I\gamma_K\bar\gamma_L$ with $\sigma_{JKL}|_U\equiv 1$, is orthonormal with respect to this metric (this follows from the same calculations as in the proof of \cite[Lem. 2.3]{k20}). Now define $r$ to be the orthogonal projection to $VB$. Because $h$ is Hermitian, $r$ respects the bigrading and since $VB$ is closed under $d$ and $d^*$, one checks that $r$ is a chain map.
	
{\em Step 2: The inclusion is an $E_1$-isomorphism.} Since $\iota$ is compatible with the real structure, we only have to check the inclusion induces an isomorphism in Dolbeault cohomology. By {\em Step 1}, $H_{\delbar}(VB)\subseteq H_{\delbar}(X)$. On the other hand, the complex $(B_\Lambda^{\ast,\ast},\delbar)$ defined in \cite{k20} is a sub-complex of $(VB,\delbar)$ and it is shown in \cite{k20} that $H_{\delbar}(B_\Lambda)\cong H_{\delbar}(X)$, hence we obtain surjectivity.
\end{proof}

We now describe the $E_1$-isomorphism type of $A_X$ completely. This is equivalent to describing the multiplicities of all non-projective indecomposable bicomplexes ({\itshape i.e.} all `zigzags') in $A_X$. We use the notation in \cite{s21}, (in particular, an {\em odd length zigzag of shape $S^{p,q}_d$} has endpoints $(p, d-p)$ and $(d-q, q)$, length $2|p+q-d|+1$ and is concentrated in degrees $d$, $d - \mathrm{sgn}(p + q - d)$):

\begin{theorem}\label{thm: E1-isotype OT mfds}
	Let $X$ be an Oeljeklaus-Toma manifold. For any even length zigzag $Z$, one has
	\[
	\mult_{[Z]}(A_X)=0.
	\]
	For odd length zigzags of shape $S_d^{p,q}$, one has
	\[
	\operatorname{mult}_{S_d^{p,q}}(A_X)=\sum_{r}\operatorname{mult}_{S_d^{p,q}}(V^rB)
	\]
	and										
	\[
	\operatorname{mult}_{S_d^{p,q}}(V^rB)=
	\begin{cases} 
		h_{\bar\partial}^{p,d-p}(V^rB)&\text{ if } p=r, q=r\\
		0&\text{ otherwise.}
	\end{cases}
	\]
\end{theorem}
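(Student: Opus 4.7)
The plan is to reduce the statement to a claim about each summand of the decomposition $VB=\bigoplus_r V^rB$ and then handle even and odd zigzags separately. By Lemma~\ref{lem: inclusion VB-AX quiso} the inclusion $\iota\colon VB\hookrightarrow A_X$ is an $E_1$-isomorphism, and by \cite[Proposition E]{s21} the multiplicity of every indecomposable shape is preserved under such maps; combined with Lemma~\ref{lem: properties of VB} and additivity of multiplicities under direct sums, this gives
\[
\mult_{[Z]}(A_X)=\sum_r\mult_{[Z]}(V^rB),
\]
which already establishes the additivity part of the statement.

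For even length zigzags I would invoke \cite[Theorem 4.5]{ot21} (see Theorem~\ref{thm:ot-cohom}): the Fr\"olicher spectral sequence of $X$ degenerates at $E_1$. By the dictionary of \cite{s21} recalled in the introduction, even length zigzags of $A_X$ correspond precisely to non-trivial differentials of the Fr\"olicher spectral sequence, so they contribute zero multiplicity to $A_X$, hence to every $V^rB$.

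For odd length zigzags the key claim is the concentration $H^{p,q}_{\bar\partial}(V^rB)=0$ unless $p=r$. Granting this, the conjugate statement $H^{p,q}_\partial(V^rB)=0$ unless $q=r$ follows immediately by complex conjugation, since $V^rB$ is stable under it by Lemma~\ref{lem: properties of VB}. To prove the concentration I would combine the isomorphism $H^{p,q}_{\bar\partial}(X)\cong\bigoplus_r H^{p,q}_{\bar\partial}(V^rB)$ coming from Lemma~\ref{lem: inclusion VB-AX quiso} with the explicit basis of $H^{p,q}_{\bar\partial}(X)$ built in \cite{k20, ot21}: its representatives have the form
\[
\exp(\Psi_{JKL})\,\omega^J\wedge\bar\omega^I\wedge\gamma^K\wedge\bar\gamma^L,\qquad \sigma_{JKL}|_U\equiv1,
\]
of bidegree $(|J|+|K|,|I|+|L|)=(p,q)$, so $|J|+|K|=p$ and the representative lies in $V^pB$.

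To conclude, by the dictionary of \cite{s21} an odd length zigzag of shape $S_d^{p,q}$ carries a one-dimensional Dolbeault class at bidegree $(p,d-p)$ and a one-dimensional conjugate Dolbeault class at bidegree $(d-q,q)$. The two concentration statements therefore force $p=r=q$ in $V^rB$, so only zigzags of shape $S_d^{r,r}$ can occur there; counting classes then gives $\mult_{S_d^{r,r}}(V^rB)=h^{r,d-r}_{\bar\partial}(V^rB)$, which by Theorem~\ref{thm:ot-cohom} equals $\sum_{q_1+q_2=d-r}\binom{s}{q_1}\rho_{r,q_2}$. The main obstacle is the concentration step: one must verify that the explicit basis of \cite{k20, ot21} genuinely splits along the $V^r$-grading (so that every class at bidegree $(p,q)$ admits a representative with $|J|+|K|=p$), and not merely that the total dimensions match.
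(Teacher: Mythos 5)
Your proposal is correct and follows essentially the same route as the paper: additivity of multiplicities via Lemma \ref{lem: inclusion VB-AX quiso} and the splitting $VB=\bigoplus_r V^rB$, Fr\"olicher degeneration from \cite{ot21} for even zigzags, and the concentration $h^{p,q}_{\bar\partial}(V^rB)=0$ for $p\neq r$ (which the paper isolates as Lemma \ref{lem: Dolbeault dim OT}, citing \cite{k20}) combined with conjugation-stability to force $p=q=r$. The only cosmetic difference is that the paper phrases the last step via refined Betti numbers $\gr^p_F\gr^q_{\bar F}H^d_{dR}(V^rB)$ and the single breakpoint of the Hodge filtration, whereas you read off the zigzag endpoints directly; your flagged ``obstacle'' about the basis splitting along the $V^r$-grading is exactly what that lemma resolves.
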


For example, if $X$ is a pluriclosed Oeljeklaus-Toma manifold with $s=t=2$, the only non-trivial relations are $\sigma_{1}|\sigma_3|^3=\sigma_2|\sigma_4|^2=1$ and one may picture the non-zero zigzags in $A_X$ as follows (we are only depicting those zigzags with non-zero multiplicity, but not writing the multiplicity):
\[
A_X\simeq_1 {V^0B}\oplus V^2B\oplus V^4B\simeq_1\img{V0B}\oplus\img{V2B}\oplus\img{V4B}
\]
We will study the cohomologies of pluriclosed Oeljeklaus-Toma manifolds more in details in Section \ref{subsec:cohom-skt}.

We note that the Dolbeault cohomology of $X$ is known explicitly \cite{ot21, k20} and from this we have \cite[Theorem 3.3 and Corollary 3.5]{k20}:

\begin{lemma}\label{lem: Dolbeault dim OT}
	Let $X$ be an Oeljeklaus-Toma manifold of type $(s,t)$.
	Consider the number $\rho_{p,m}$ of non-trivial relations among the $\sigma_i$ with fixed contributions from real and complex parts as defined in \eqref{eq:rho-dolb}.
	Then,
	\[
	h_{\bar\partial}^{p,q}(V^rB)=\begin{cases} \sum_{q_1+q_2=q}{s\choose q_1}\rho_{p,q_2}&\text{ if }p=r\\
		0&\text{ otherwise.}
	\end{cases}
	\]
\end{lemma}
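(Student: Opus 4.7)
The plan is to exhibit, inside each $V^rB$, a subcomplex $B_\Lambda^r$ on which $\bar\partial$ vanishes identically and which is concentrated in bidegree $(r,\ast)$, and then to show that the inclusion $B_\Lambda^r\hookrightarrow V^rB$ induces a Dolbeault isomorphism. The subcomplex $B_\Lambda:=\bigoplus_r B_\Lambda^r$ is the one introduced in \cite{k20}: it is spanned by forms $\exp(\Psi_{JKL})\,\omega^J\wedge\gamma^K\wedge\bar\gamma^L\wedge\bar\omega^{H'}$ with $J,H'\subseteq\{1,\ldots,s\}$, $K,L\subseteq\{1,\ldots,t\}$ and $\sigma_{JKL}|_U\equiv 1$. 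Such a form has bidegree $(|J|+|K|,|L|+|H'|)$; setting $r:=|J|+|K|$ yields $B_\Lambda^r\subseteq V^rB$ because $\exp(\Psi_{JKL})\gamma^K\bar\gamma^L\in V^r$ whenever $|J|+|K|=r$, and by construction $B_\Lambda^r$ lives in bidegrees $(r,\ast)$.

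The key technical step is to verify $\bar\partial|_{B_\Lambda}\equiv 0$. Using \eqref{eq:struct-eq} together with $\bar\partial\log\Im w^h=\tfrac{\sqrt{-1}}{2}\bar\omega^h$, the Dolbeault differential of a basis element splits into three $\bar\omega^h$-contributions, coming from $\bar\partial\exp(\Psi_{JKL})$, from $\bar\partial\omega^J$, and from $\bar\partial(\gamma^K\wedge\bar\gamma^L)$. Taking real and imaginary parts of the logarithm, and using that $\{(\log\sigma_k(u))_k\}_{u\in U}$ spans a full-rank lattice in $\mathbb{R}^s$ by the Dirichlet unit theorem, the constraint $\sigma_{JKL}|_U\equiv 1$ is equivalent, for each $h\in\{1,\ldots,s\}$, to the identities
\[
[h\in J]+\tfrac{1}{2}\sum_{k\in K}b_{h,k}+\tfrac{1}{2}\sum_{\ell\in L}b_{h,\ell}=0,\qquad \sum_{k\in K}c_{h,k}=\sum_{\ell\in L}c_{h,\ell}.
\]
A direct (if bookkeeping-heavy) computation shows that these are precisely what is needed to make all $\bar\omega^h$-coefficients cancel, yielding $\bar\partial=0$ on $B_\Lambda$.

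Once the vanishing is established, one reads off $h^{p,q}_{\bar\partial}(B_\Lambda^r)=\dim B_\Lambda^{r;p,q}$, which is zero for $p\neq r$ and equals $\sum_{q_1+q_2=q}{s\choose q_1}\rho_{r,q_2}$ for $p=r$: the ${s\choose q_1}$ counts subsets $H'$ of size $q_1$, while $\rho_{r,q_2}$ counts triples $(J,K,L)$ with $|J|+|K|=r$, $|L|=q_2$ and $\sigma_{JKL}|_U\equiv 1$, matching \eqref{eq:rho-dolb} after the evident relabeling of complex indices. To transfer the computation from $B_\Lambda^r$ to $V^rB$, I combine the decomposition $VB=\bigoplus_r V^rB$ (Lemma \ref{lem: properties of VB}) and the Dolbeault quasi-isomorphism $VB\hookrightarrow A_X$ (Lemma \ref{lem: inclusion VB-AX quiso}) with the analogous decomposition $B_\Lambda=\bigoplus_r B_\Lambda^r$ and the Dolbeault quasi-isomorphism $B_\Lambda\hookrightarrow VB$ used in the proof of Lemma \ref{lem: inclusion VB-AX quiso}. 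Since all these maps respect the $r$-grading, the induced isomorphism $\bigoplus_r H^{*,*}_{\bar\partial}(B_\Lambda^r)\xrightarrow{\sim}\bigoplus_r H^{*,*}_{\bar\partial}(V^rB)$ is block diagonal with respect to $r$, forcing each block $H^{p,q}_{\bar\partial}(B_\Lambda^r)\to H^{p,q}_{\bar\partial}(V^rB)$ to be an isomorphism.

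The main obstacle is the sign-and-coefficient bookkeeping in the verification of $\bar\partial|_{B_\Lambda}\equiv 0$, which however is essentially contained in \cite[Theorem 3.3]{k20}; beyond that, the argument is a matter of combining the two decompositions $VB=\bigoplus_r V^rB$ and $B_\Lambda=\bigoplus_r B_\Lambda^r$ with the known quasi-isomorphisms.
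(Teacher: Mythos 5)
Your proof is correct and takes essentially the same route as the paper, which establishes this lemma simply by citing \cite[Theorem 3.3 and Corollary 3.5]{k20}, i.e.\ precisely Kasuya's subcomplex $B_\Lambda$ with $\bar\partial|_{B_\Lambda}\equiv 0$ concentrated in holomorphic degree $r$, combined with the grading-compatible quasi-isomorphisms $B_\Lambda\hookrightarrow VB\hookrightarrow A_X$ already set up in Lemma \ref{lem: properties of VB} and the proof of Lemma \ref{lem: inclusion VB-AX quiso}. Your write-up simply unfolds that citation; the only point to note is that the identity $\sum_{k\in K}c_{h,k}=\sum_{\ell\in L}c_{h,\ell}$ you extract from $\sigma_{JKL}|_U\equiv 1$ is exactly the same simplification the paper itself uses in defining $\Psi_{JKL}$, so your argument is faithful to (and no less rigorous than) the source.
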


\begin{proof}[Proof of Theorem \ref{thm: E1-isotype OT mfds}]
	The statement about even length zigzags is equivalent to the degeneration of the Fr\"olicher spectral sequence of $X$ on the first page, which is known by \cite{ot21}. By Lemma \ref{lem: properties of VB} and Lemma \ref{lem: inclusion VB-AX quiso}, we have 
	\[
	\mult_{[Z]}(A_X)=\sum_{r}\mult_{[Z]}(V^rB)
	\] 
	for any zigzag $Z$. The multiplicities of odd length zigzags can be computed from the refined Betti-numbers, {\itshape i.e.} $\mult_{[S_d^{p,q}]}(V^rB)=b_d^{p,q}(V^rB):= \gr^p_F\gr^q_{\bar{F} }H^d_{dR}(V^rB)$, see \cite{s20,s21}, where $F$ denotes the Hodge filtration induced by the Fr\"olicher filtration $F^pA_X:=\bigoplus_{r\geq p}A^{r,\bullet}_X$ and $\bar F$ its conjugate. By Lemma \ref{lem: Dolbeault dim OT}, there is just one vertical strip of non-zero entries in $H^{\bullet,\bullet}(V^rB)$. Therefore, the Hodge filtration on $H_{dR}^d(V^rB)$ has exactly one breakpoint (if $b_d(V^rB)\neq 0$), namely,
	\[
	\{0\}=F^{r + 1}H_{dR}^d(V^rB)\subseteq F^{r} H_{dR}^d(V^rB)=H_{dR}^d(V^rB).
	\]
	As $V^rB$ is stable under conjugation, the same holds for the conjugate Hodge-filtration. Hence we get that for each $d$, there is at most one nonzero refined Betti-number $b_d^{p,q}(V^rB)$, {\itshape i.e.}:
	\[
	b_d^{p,q}(V^rB)=\operatorname{mult}_{S_d^{p,q}}(V^rB)=
	\begin{cases} 
		h_{\bar\partial}^{p,d-p}(V^rB)&\text{ if } p=r, q=r\\
		0&\text{ otherwise,}
	\end{cases}
	\]
completing the proof.
\end{proof}

From Theorem \ref{thm: E1-isotype OT mfds}, we may compute the dimensions of any cohomological invariant, by only computing how this invariant evaluates on odd length zigzags. We carry this out for Bott-Chern cohomology, but the reader will have no difficulty in doing the analogous procedure for Aeppli cohomology (which also follows by duality), the Varouchas groups, any higher-page analogues or the groups appearing in the Schweitzer complex.

\begin{corollary}\label{cor:bc-ot}
Let $X(K, U)$ be an Oeljeklaus-Toma manifold of type $(s, t)$.
	The Bott-Chern numbers of $X$ are given by:
	\[
	h_{BC}^{p,q}(X)=\sum_{r} h_{BC}^{p,q}(V^rB)
	\]
	and 
	\[
	h_{BC}^{p,q}(V^rB)=
	\begin{cases}
		h_{\bar\partial}^{r, p+q-r } &\text{ if } r\geq p,r\geq q,\\
		h_{\bar\partial}^{r, p+q-r-1} &\text{ if } r<p,r<q,\\
		0&\text{ otherwise.}
	\end{cases}
	\]
\end{corollary}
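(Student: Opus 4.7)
The plan is to deduce the Bott-Chern formula directly from the zigzag description in Theorem \ref{thm: E1-isotype OT mfds}, using the general fact from \cite{s21} that Bott-Chern dimensions are combinatorially read off from the indecomposable summands of the $E_1$-isomorphism type of the underlying double complex.

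First, I would invoke Lemma \ref{lem: inclusion VB-AX quiso} together with Lemma \ref{lem: properties of VB} to reduce the computation to $V^rB$: since $VB \hookrightarrow A_X$ is an $E_1$-isomorphism and $VB = \bigoplus_r V^rB$ as sub-double complexes, and since Bott-Chern cohomology is invariant under $E_1$-isomorphism by \cite[Corollary B]{s21}, one obtains $h_{BC}^{p,q}(X) = \sum_r h_{BC}^{p,q}(V^rB)$. By Theorem \ref{thm: E1-isotype OT mfds}, the only zigzag shapes appearing in $V^rB$ are of the form $S_d^{r,r}$, with multiplicities $h_{\bar\partial}^{r,d-r}(V^rB)$.

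Next, I would carry out the key combinatorial step: for each zigzag $S_d^{r,r}$, locate the vertices with no outgoing $\partial$ or $\bar\partial$ arrow (the \emph{top-right corners}), since these are exactly the vertices that contribute to Bott-Chern cohomology. Splitting according to the sign of $d-2r$: for $d=2r$ the zigzag is a single vertex at $(r,r)$, itself a top-right corner; for $d<2r$ the zigzag is an ``L-shape'' of length $2(2r-d)+1$ whose top-right corners sit on the antidiagonal $p+q=d$ at the positions $(r-i,\,d-r+i)$ for $i=0,\dots,2r-d$; for $d>2r$ the zigzag is an ``inverted L-shape'' of length $2(d-2r)+1$ whose top-right corners are its interior vertices, lying on the antidiagonal $p+q=d+1$ at the positions $(r+j,\,d-r+1-j)$ for $j=1,\dots,d-2r$ (the two endpoints at degree $d$ carry one outgoing arrow each, so they are \emph{not} top-right corners in this regime).

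Inverting this description, for a fixed bidegree $(p,q)$ there is at most one value of $d$ producing a top-right corner of $S_d^{r,r}$ at $(p,q)$: if $p\leq r$ and $q\leq r$, this is $d=p+q$, contributing $h_{\bar\partial}^{r,\,p+q-r}(V^rB)$; if $p>r$ and $q>r$, this is $d=p+q-1$, contributing $h_{\bar\partial}^{r,\,p+q-r-1}(V^rB)$; in the remaining cases (one of $p,q$ lies on one side of $r$ and the other on the opposite side), the required antidiagonal/square condition fails, and no zigzag of $V^rB$ contributes. Summing the resulting contributions gives the claimed formula. The main technical hurdle I anticipate is the precise combinatorial tabulation of the zigzag $S_d^{r,r}$ and the identification of its top-right corners, which requires carefully tracking how the alternating $\partial$/$\bar\partial$ arrows are oriented in each of the three regimes $d<2r$, $d=2r$, $d>2r$; once this description is fixed, the rest reduces to mechanical case-checking on the position $(p,q)$.
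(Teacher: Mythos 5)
Your proposal is correct and follows essentially the same route as the paper: reduce to $\bigoplus_r V^rB$ via the $E_1$-isomorphism, invoke Theorem \ref{thm: E1-isotype OT mfds} to see that only zigzags of shape $S_d^{r,r}$ occur, and count their top-right corners, which is exactly the content of the formulas $h_{BC}^{a,b}([S_d^{p,q}])$ the paper quotes from \cite{s21}. Your explicit tabulation of the corners in the three regimes $d<2r$, $d=2r$, $d>2r$ and the inversion to fixed $(p,q)$ are accurate (only the phrase ``interior vertices'' in the $d>2r$ case is slightly loose, since the interior also contains degree-$d$ vertices with two outgoing arrows, but the positions you list are the right ones).
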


\begin{proof}
	We have $A_X\simeq_1\bigoplus_r V^rB$ and hence the first equality follows. For the second, by Theorem \ref{thm: E1-isotype OT mfds}, we have 
	\[
	h_{BC}(V^rB)=\sum_{d}h_{\delbar}^{r,d-r}(V^rB)h_{BC}^{p,q}([S_{d}^{r,r}]).
	\]
	Now, use the formulas
	\[
	h_{BC}^{a,b}([S_d^{p,q}])=\begin{cases}
		1 &\text{ if } a+b=d, a\leq p, b\leq q\\
		0&\text{ otherwise.}
	\end{cases}
	\]
	in the case $p+q\geq d$
	and 
	\[
	h_{BC}^{a,b}([S_d^{p,q}])=
	\begin{cases}
		1&\text{ if } a+b=d+1,~ a>p,b>q\\
		0&\text{ otherwise.}
	\end{cases}
	\]
	for $p+q< d$. These are obtained from \cite{s21} and easily seen directly as Bott-Chern cohomology counts top-right corners of zigzags. {\itshape E.g.}:
	\[
	H_{BC}\left(
	\begin{tikzcd}
		\langle a_1\rangle\ar[r]&\langle \del a_1\rangle\\
		&\langle a_2\rangle\ar[u]
	\end{tikzcd}\right)=
	\left(
	\begin{tikzcd}
		0&\langle\del a_1\rangle\\
		0&0
	\end{tikzcd}
	\right)
	\]
	and
	\[
	H_{BC}\left(
	\begin{tikzcd}
		\langle \delbar a\rangle&\\
		\langle a\rangle\ar[r]\ar[u]&\langle \del a\rangle
	\end{tikzcd}\right)=
	\left(	
	\begin{tikzcd}
		\langle\delbar a\rangle&0\\
		0&\langle\del a\rangle
	\end{tikzcd}
	\right)
	\]
\end{proof}

\begin{remark} One can in principle also argue via explicit representatives, {\itshape i.e.} explicitly compute the Bott-Chern cohomology of $V^\bullet B$. However, such representatives are not necessarily of a simple form; in particular, they are generally not elementary wedges, but rather sums of such. For example, if $s=t=2$, the natural map
	\[
	H_{BC}^{2,2}(X)\to H_{\bar\partial}^{2,2}(X)
	\]
	is an isomorphism, but the preimage of $[\gamma_{1\bar 1}\omega_{1\bar 2}]$ cannot be represented by the same form as it is not $\partial$-closed. Rather, one has to modify it by \[
	\bar\partial(\gamma_{1\bar 1}\omega_2)=\frac{\sqrt{-1}}{2}(\omega_{\bar 2 1}\gamma_{1\bar 1} - \gamma_{1\bar 1}\omega_{2\bar 2})
	\]
	to obtain something in the kernel of $\partial $ and $\bar\partial$.
\end{remark}

\subsection{Cohomological properties of pluriclosed Oeljeklaus-Toma manifolds}\label{subsec:cohom-skt}
In this section, we restrict to Oeljeklaus-Toma manifolds admitting pluriclosed metrics, as characterized in Corollary \ref{rem:dubickas}, and we study their cohomological properties. In particular, we prove that the Dolbeault cohomology is invariant (see Remark \ref{rem:inv-pluriclosed}) and we explicitly compute the Betti and Hodge numbers (see Theorem \ref{cohomology}): note that the cohomologies are not zero most of the times.

Recall that $\rho_m$, $\rho_{p,m}$ were introduced in \eqref{eq:rho-dr}, respectively \eqref{eq:rho-dolb}. It is clear that $\rho_0=1$ and $\rho_1=0$ always; one can prove that $\rho_2=0$ always \cite[Proposition 2.4]{apv16}. When $X(K,U)$ admits pluriclosed metrics, then it is proven that $\rho_3=s$ and $\rho_{2,1}=s$, therefore $b_3={s\choose 3}+s$ and $h^{2,1}_{\overline\partial}=s$ \cite[Proposition 3.2.1]{o20}. Moreover, for $n=4$, namely, $s=t=2$, the converse holds true: $X$ admits a pluriclosed metric if and only if $b_3=2$ if and only if $h^{2,1}_{\overline\partial}=2$ \cite[Proposition 3.2.2]{o20}.
In this section, we generalize the above results.

Meanwhile, we notice the following:
\begin{proposition}
Oeljeklaus-Toma manifolds admitting pluriclosed metrics are of simple type.
\end{proposition}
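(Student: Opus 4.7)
The plan is to reduce the claim to a short arithmetic contradiction by combining Corollary \ref{rem:dubickas} with the Dirichlet unit theorem. By Corollary \ref{rem:dubickas}, admitting a pluriclosed metric forces $s=t$, and hence $[K:\mathbb{Q}]=s+2t=2s$.

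I would then argue by contradiction. Assume $X(K,U)$ is not of simple type, so there exists a proper intermediate field $\mathbb{Q}\subset K'\subsetneq K$ with $U\subseteq\mathcal{O}_{K'}^{*,+}$; denote the signature of $K'$ by $(s',t')$. Admissibility gives $\operatorname{rank} U=s$, and Dirichlet's unit theorem applied to $K'$ gives $\operatorname{rank}\mathcal{O}_{K'}^{*,+}=s'+t'-1$, so that
\[
s'+t'\geq s+1.
\]
On the other hand, $[K':\mathbb{Q}]=s'+2t'$ is a proper divisor of $[K:\mathbb{Q}]=2s$; any proper divisor $d$ of $2s$ satisfies $2s/d\geq 2$ and therefore $d\leq s$, so
\[
s'+2t'\leq s.
\]
Subtracting yields $t'=(s'+2t')-(s'+t')\leq s-(s+1)=-1$, which is absurd since $t'\geq 0$. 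Hence $X(K,U)$ must be of simple type.

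The main obstacle here is essentially nil: the argument is a direct numerical comparison, and it in fact proves the stronger statement that every Oeljeklaus-Toma manifold with $s=t$ is automatically of simple type, irrespectively of whether $U$ satisfies the further pluriclosed relations. The nontrivial content of the proposition is therefore entirely concentrated in Theorem \ref{unitthm}, via Corollary \ref{rem:dubickas}.
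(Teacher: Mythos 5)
There is a genuine gap, and it comes from a single but fatal arithmetic slip: you write $[K:\mathbb{Q}]=s+2t=2s$, but the degree of the number field is $s+2t=3s$ when $s=t$ (you have conflated the complex dimension $n=s+t$ of the manifold with the degree $s+2t$ of $K$). With the correct degree the purely numerical comparison no longer closes. A proper divisor $d$ of $3s$ only satisfies $d\leq 3s/2$ (attained when $s$ is even), so your two inequalities become $s'+2t'\leq 3s/2$ and $s'+t'\geq s+1$, which give only $t'\leq s/2-1$ — no contradiction. Concretely, for $s=t=4$ the signature $(s',t')=(4,1)$ is numerically admissible: $[K':\mathbb{Q}]=6$ properly divides $12$ and $\operatorname{rank}\mathcal{O}_{K'}^{*}=s'+t'-1=4=s$. (Your argument does happen to work when $s$ is odd, since then every proper divisor of $3s$ is at most $s$; but the even case is not covered, and your closing claim that every Oeljeklaus-Toma manifold with $s=t$ is automatically of simple type is therefore not established by this route.)

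The missing ingredient is exactly where the pluriclosed relations enter in the paper's proof. Taking $K'=\mathbb{Q}(U)$, admissibility forces the restrictions $\sigma_1|_{K'},\dots,\sigma_s|_{K'}$ to be pairwise distinct, and the relations $\sigma_i(u)|\sigma_{s+i}(u)|^2=1$ from Corollary \ref{rem:dubickas} then force the restrictions $\sigma_{s+1}|_{K'},\dots,\sigma_{s+t}|_{K'}$ to be pairwise distinct and distinct from the real ones. This yields $[K':\mathbb{Q}]\geq 2s$, and since $2s>3s/2$, the only divisor of $3s$ that is $\geq 2s$ is $3s$ itself, whence $K'=K$. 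So the Dirichlet-plus-divisor skeleton of your argument is the right frame, but the lower bound on $[K':\mathbb{Q}]$ must come from counting distinct restricted embeddings (which genuinely uses the pluriclosed relations), not from the unit rank alone.
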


\begin{proof}
Assume there exists a finite extension $\mathbb{Q} \subseteq K' \subseteq K$ such that $U \subseteq K'$; we take $K'=\mathbb Q(U)$. The embeddings of $K'$ are simply the restrictions of the embeddings of $K$, namely $\sigma_1|_{K'}, \ldots, \overline{\sigma}_{2s}|_{K'}$. Since $U$ is admissible and $U \subseteq K'$, $\sigma_i|_{K'} \neq \sigma_j|_{K'}$, for any $1 \leq i<j \leq s$. However, as the pluriclosed condition is satisfied, $\sigma_i|_{U} \sigma_{s+i}|_{U} \overline{\sigma}_{s+i}|_{U} \equiv 1$ and therefore, $\sigma_i|_{K'} \sigma_{s+i}|_{K'} \overline{\sigma}_{s+i}|_{K'} \equiv 1$, which further implies that $\sigma_{s+i}|_{K'} \neq \sigma_{s+j}|_{K'}$, for any $1 \leq i<j \leq s$. Moreover, $\sigma_{s+i}|_{K'} \neq \sigma_{j}|_{K'}$, for any $1 \leq i,j \leq s$, otherwise this would imply a non-trivial multiplicative relations between $\sigma_1|_{K'}, \ldots, \sigma_s|_{K'}$, which is impossible. Hence, the degree $[K': \mathbb{Q}] \geq 2s$. As $[K: \mathbb{Q}]=[K: K'] \cdot [K': \mathbb{Q}]$ and $[K: \mathbb{Q}]=3s$, we infer that $K=K'$ and conclude that $X(K, U)$ is of simple type.
\end{proof}

We now compute the de Rham and Dolbeault cohomologies of pluriclosed Oeljeklaus-Toma manifolds in the following:
\begin{theorem}\label{cohomology}
Let $X(K, U)$ be an Oeljeklaus-Toma manifold admitting a pluriclosed metric. Then the Betti and Hodge numbers are:
$$b_\ell=\sum_{k \leq s} {s \choose \ell-3k} \cdot {s \choose k} ,
\quad\quad
h^{p, q}_{\overline\partial}= \left \{
    \begin{array}{ll}
      {s \choose q-\frac{p}{2}} \cdot {s \choose \frac{p}{2}},  & \mbox{if } p \, \,  \mbox{even}\\
        0, & \mbox{otherwise.}
    \end{array} \right. $$
\end{theorem}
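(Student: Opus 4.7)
By Corollary \ref{rem:dubickas}, the pluriclosed hypothesis is equivalent to $s=t$ together with $\sigma_j(u)|\sigma_{s+j}(u)|^2=1$ for every $u\in U$ and $j\in\{1,\dots,s\}$. By Theorem \ref{thm:ot-cohom}, the Betti and Hodge numbers decompose as
\[
b_\ell=\sum_{\ell'+m=\ell}\binom{s}{\ell'}\rho_m,\qquad h^{p,q}_{\overline\partial}=\sum_{\ell+m=q}\binom{s}{\ell}\rho_{p,m},
\]
so it suffices to prove
\[
\rho_m=\begin{cases}\binom{s}{k}&\text{if }m=3k,\\ 0&\text{otherwise,}\end{cases}\qquad \rho_{p,m}=\begin{cases}\binom{s}{p/2}&\text{if $p$ is even and }m=p/2,\\ 0&\text{otherwise.}\end{cases}
\]
Substituting into the two formulas above reproduces exactly the claimed $b_\ell$ and $h^{p,q}_{\overline\partial}$.

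The natural candidates for non-trivial relations are the \emph{pluriclosed packages} $\sigma_j\sigma_{s+j}\bar\sigma_{s+j}\equiv 1$ on $U$ and their arbitrary products over subsets $Q\subseteq\{1,\dots,s\}$; such a product has length $3|Q|$ and, in the $(I,J)$-decomposition relevant to $\rho_{p,m}$, contributes to $\rho_{2|Q|,|Q|}$. My plan is to prove there are \emph{no other} relations. Take any candidate $\prod_{j\in A}\sigma_j\prod_{k\in B'}\sigma_{s+k}\prod_{\ell\in C'}\bar\sigma_{s+\ell}\equiv 1$ on $U$, with $A,B',C'\subseteq\{1,\dots,s\}$ (here $B'$ and $C'$ encode the complex indices used by $\sigma$ and by $\bar\sigma$ respectively, shifted by $s$). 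Taking the logarithm of the modulus and substituting the pluriclosed identity $\log\sigma_j(u)=-2\log|\sigma_{s+j}(u)|$ turns the relation into
\[
\sum_{k=1}^{s}\bigl(-2\,\mathbf{1}_A(k)+\mathbf{1}_{B'}(k)+\mathbf{1}_{C'}(k)\bigr)\log|\sigma_{s+k}(u)|=0 \qquad\text{for all } u\in U.
\]
Admissibility of $U$ makes $\{(\log\sigma_1(u),\dots,\log\sigma_s(u)):u\in U\}$ a full-rank lattice in $\mathbb{R}^s$; via the pluriclosed identity, this transfers directly to the $\log|\sigma_{s+k}(u)|$'s, so every coefficient above must vanish. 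Since the indicators take values in $\{0,1\}$, this forces $A=B'=C'$. Crucially, the argument part of the relation, $\sum_{k\in B'}\arg\sigma_{s+k}(u)-\sum_{\ell\in C'}\arg\sigma_{s+\ell}(u)\equiv 0\pmod{2\pi}$, is then automatic. Thus the relations are parametrized by subsets $Q\subseteq\{1,\dots,s\}$, and the counts for $\rho_m$ and $\rho_{p,m}$ come out as claimed (respectively with $|I\cup J|=3|Q|$ and with $|I|=2|Q|$, $|J|=|Q|$).

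\textbf{Main obstacle.} The subtle point is ruling out any ``exotic'' relations beyond the pluriclosed packages. The clean input is that under pluriclosed one has $\log|\sigma_{s+j}(u)|=-\tfrac{1}{2}\log\sigma_j(u)$, so admissibility kills the modulus side of a candidate relation immediately, and the argument side is forced automatically once $B'=C'$. In more general metric settings (such as LCK), this automatic vanishing of the argument equations fails, and one would face genuine Kronecker-type questions about $\arg\sigma_{s+k}(u)\pmod{2\pi}$ which are neatly bypassed here.
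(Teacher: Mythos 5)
Your proposal is correct and follows essentially the same route as the paper: both reduce to computing $\rho_m$ and $\rho_{p,m}$ via the formulas of \cite{io19, ot21}, and both rule out exotic relations by passing to the modulus of a candidate relation (the paper multiplies by the conjugate relation, you take $\log|\cdot|$ and substitute $\log\sigma_j(u)=-2\log|\sigma_{s+j}(u)|$ — the same manipulation), which turns it into a linear relation among the $\log\sigma_k(u)$ that admissibility kills. Your explicit indicator-coefficient bookkeeping and the remark that the argument equation becomes automatic once $A=B'=C'$ are just a more detailed write-up of the paper's terser argument.
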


\begin{proof}
{\em Step 1:} By Corollary \ref{rem:dubickas}, we have $s=t$ and $|\sigma_{s+i}(u)|^2\sigma_i(u)=1$, for any $1 \leq i \leq s$, for any $u\in U$. 

By \cite[Theorem 3.1]{io19}:
\begin{equation}\label{betti}
    b_\ell= \rho_\ell+ {s \choose 1} \rho_{\ell-1} + {s \choose 2} \rho_{\ell-2}+ \cdots + {s \choose \ell-2} \rho_{2} + {s \choose \ell},
\end{equation}
where $\rho_m$ is defined in \eqref{eq:rho-dr}. 
We prove now that $\rho_{3k}={s \choose k}$ and $\rho_m=0$, if $m \neq 3k$. Indeed, let $I=\{i_1, \ldots, i_m\}$ be such that $\sigma_I(u)=1, \forall u \in U$. Then 
\begin{equation}\label{conjugate}
    \sigma_I(u) \cdot \overline{\sigma_I(u)}=1, \forall u \in U.
\end{equation}

If $I \neq \cup_{j \in J} \{j, s+j, 2s+j\}$, where $J \subseteq \{1, \ldots, s\}$, then, since for any $i_r \geq s+1$, $\sigma_{i_r}(u) \cdot \overline{\sigma_{i_r}(u)}=\frac{1}{\sigma_{i_r-s}(u)}$ thanks to the pluriclosed condition, we get that \eqref{conjugate} would further imply a multiplicative relation between $\sigma_1|_U, \ldots, \sigma_s|_U$, which is impossible by the admissibility of $U$.

Therefore, if $\sigma_I|_U \equiv 1$, then $I = \cup_{j \in J} \{j, s+j, 2s+j\}$ and consequently, $\rho_m=0$, if $m \neq 3k$ and
\begin{equation*}
    \rho_{3k}=\sharp\{J \mid |J|=k, J \subseteq \{1, \ldots, s\}\}={s \choose k}.
\end{equation*}
Applying \eqref{betti}, we get the formula for the Betti numbers.

{\em Step 2:} By \cite[Relation (4.11)]{ot21} we have the following formula for the Hodge numbers:
\begin{equation}\label{hodge}
    h^{p, q}_{\overline{\partial}}=\sum_{\ell+m=q} {s \choose \ell} \rho_{p,m} ,
\end{equation}
where $\rho_{p,m}$ is defined in \eqref{eq:rho-dolb}.

By the previous characterization of non-trivial multiplicative relations between the embeddings, we obtain that $\sigma_{I \cup J}|_U \equiv 1$ (where $I$ and $J$ are as in \eqref{hodge}) if and only if $p=2j$ and $I \cup J= \cup_{k \in K} \{k, s+k, 2s+k\}$, where $K \subseteq \{1, \ldots, s\}$. This further means that:
\begin{equation*}
    \sharp\{ I \subseteq \{1, \ldots s+t\}, J \subseteq \{s+t+1, \ldots s+2t\} \mid |I|=p, |J|=j, \sigma_{I \cup J}|_U \equiv 1\}={s \choose \frac{p}{2}}
\end{equation*}
and we obtain the formulas for the Hodge numbers for pluriclosed $X(K, U)$.
\end{proof}

\begin{remark}\label{rem:inv-pluriclosed}
Notice that pluriclosed Oeljeklaus-Toma manifolds $X(K,U)$ have invariant de Rham and Dolbeault cohomology, in the sense that $H^\bullet(X(K, U)) \simeq H^\bullet(\mathfrak{g})$ and $H_{\overline{\partial}}^{\bullet,\bullet}(X(K, U)) \simeq H_{\overline{\partial}}^{\bullet,\bullet}(\mathfrak{g})$, where $\mathfrak{g}$ is the Lie algebra of the Lie group $(\mathbb H^s\times\mathbb C^t,*)$. Indeed, this is an immediate consequence of the fact that 
$$H^{\bullet, \bullet}_{\overline{\partial}}(X(K, U))= \left\langle \frac{d\overline{w}_1}{\Im w_1}, \ldots, \frac{d\overline{w}_s}{\Im w_s}, dw_1 \wedge dz_1 \wedge d\overline{z}_1, \ldots,  dw_s \wedge dz_s \wedge d\overline{z}_s  \right\rangle_{\mathbb{C}}$$
and all the listed forms are left-invariant with respect to the solvmanifold structure described in Section \ref{sec:ot-solvmanifold}.
\end{remark}

Generalizing \cite[Proposition 3.2.2]{o20}, we have the following cohomological characterization of the pluriclosed condition:
\begin{theorem}\label{thm:cohom-skt}
Let $X(K, U)$ be an Oeljeklaus-Toma manifold of type $(s,t)$. The following statements are equivalent:
\begin{enumerate}
    \item $X(K, U)$ admits a pluriclosed metric;
    \item $s=t$, $h_{\overline\partial}^{2, 1}=s$, $h_{\overline\partial}^{4, 2}={s \choose 2}$, $h_{\overline\partial}^{1, 2}=0$ and the natural map
    \begin{equation*}
       \bigwedge: H_{\overline{\partial}}^{2, 1}(X) \times H_{\overline{\partial}}^{2, 1}(X) \rightarrow H_{\overline{\partial}}^{4, 2}(X)
    \end{equation*}
    is surjective.
    \end{enumerate}
\end{theorem}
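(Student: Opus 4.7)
The plan is to prove the two implications separately. The forward direction (1)$\Rightarrow$(2) is a direct verification using the tools already developed, while (2)$\Rightarrow$(1) reduces to a careful analysis of the number-theoretic data encoded by the Hodge hypotheses.

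\textit{Forward direction.} Assuming $X(K,U)$ admits a pluriclosed metric, Corollary~\ref{rem:dubickas} gives $s=t$ and the relations $\sigma_j|\sigma_{s+j}|^2\equiv 1$ for each $j$. Substituting into Theorem~\ref{cohomology} yields $h^{2,1}_{\overline\partial}=s$, $h^{4,2}_{\overline\partial}={s \choose 2}$, and $h^{1,2}_{\overline\partial}=0$. The wedge surjectivity is visible at the level of invariant forms: by Remark~\ref{rem:inv-pluriclosed}, $H^{2,1}_{\overline\partial}(X)$ is spanned by the left-invariant classes $[dw_j\wedge dz_j\wedge d\bar z_j]$ for $j=1,\ldots,s$, whose ${s \choose 2}$ pairwise wedges are manifestly linearly independent invariant representatives in $H^{4,2}_{\overline\partial}(X)$, hence form a basis.

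\textit{Reverse direction.} I translate the Hodge hypotheses via Theorem~\ref{thm:ot-cohom} into the integers $\rho_{p,m}$: $h^{1,2}_{\overline\partial}=0$ enforces $\rho_{1,0}=\rho_{1,1}=\rho_{1,2}=0$, while $h^{2,1}_{\overline\partial}=s$ gives $\rho_{2,1}+s\rho_{2,0}=s$. The first task is to show $\rho_{2,0}=0$. If $\rho_{2,0}\geq 1$, the linear constraint forces $\rho_{2,0}=1$ and $\rho_{2,1}=0$, so every class of $H^{2,1}_{\overline\partial}$ has the form $[\phi\wedge\bar\omega^\ell]$ for a single decomposable invariant $(2,0)$-form $\phi$; every pairwise wedge then contains $\phi\wedge\phi=0$, contradicting surjectivity onto $H^{4,2}_{\overline\partial}$ (nonzero for $s\geq 2$; the case $s=1$ follows directly from the norm relation together with $\rho_{1,0}=0$). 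Hence $\rho_{2,1}=s$, and I obtain $s$ harmonic representatives $e_\ell=\exp(\Psi_\ell)\omega^{J_\ell}\gamma^{K_\ell}\bar\gamma^{L_\ell}$ of $H^{2,1}_{\overline\partial}$ with $|J_\ell|+|K_\ell|=2$ and $|L_\ell|=1$. The non-vanishing of all ${s \choose 2}$ pairwise wedges (forced by surjectivity together with the dimension equality $\dim H^{4,2}_{\overline\partial}={s \choose 2}$) translates into disjointness $J_\ell\cap J_m=K_\ell\cap K_m=L_\ell\cap L_m=\emptyset$ for $\ell\neq m$; a counting argument using $\sum|J_\ell|+\sum|K_\ell|=2s$ with each sum at most $s$ forces each of the three collections to partition $\{1,\ldots,s\}$.

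\textit{Main obstacle.} The technically delicate step is to show each $R_\ell$ has pluriclosed form, namely $(|J_\ell|,|K_\ell|)=(1,1)$ with $K_\ell=L_\ell$ (as singletons). I rule out the other configurations by producing forbidden $\rho_{1,2}$ relations. If $(|J_\ell|,|K_\ell|)=(2,0)$, conjugating $R_\ell$ shows that $\sigma_{s+l}|_U$ (with $\{l\}=L_\ell$) is real positive; then the relation $R_m$ whose $K_m$-class contains $l$, after using $\bar\sigma_{s+l}=\sigma_{s+l}$, rearranges into $\sigma_i\bar\sigma_{s+l}\bar\sigma_{s+l_m}=1$, a $\rho_{1,2}$ relation with distinct antiholomorphic indices. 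An analogous argument, using that $\sigma_{s+k}|_U$ becomes real positive for one of the $K_\ell$-indices, handles $(|J_\ell|,|K_\ell|)=(0,2)$; and in the $(1,1)$ case with $k_\ell\neq l_\ell$, substituting the conjugate of $R_\ell$ into the relation $R_m$ indexed by the partition class of $l_\ell$ in $\{K_m\}$ yields the same type of contradiction. Once every $R_\ell$ has pluriclosed form $\sigma_{j_\ell}|\sigma_{s+k_\ell}|^2=1$, the bijections $\ell\mapsto j_\ell$ and $\ell\mapsto k_\ell$ allow a simultaneous relabelling of the real and complex embeddings to bring all relations into the standard form of Corollary~\ref{rem:dubickas}, so that $X(K,U)$ admits a pluriclosed metric.
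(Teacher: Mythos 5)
Your overall architecture coincides with the paper's: the forward direction is exactly the paper's (Theorem \ref{cohomology} plus the invariant representatives of Remark \ref{rem:inv-pluriclosed}), and in the reverse direction the paper likewise converts the Hodge hypotheses into the statement that $\rho_{2,1}=s$, uses surjectivity of the wedge map onto the $\binom{s}{2}$-dimensional $H^{4,2}_{\overline\partial}$ to force the $s$ triples of indices to be pairwise disjoint and to partition $\{1,\ldots,s+2t\}$, and then uses conjugation to pin down the shape of each relation. (Your explicit elimination of $\rho_{2,0}$ is fine, though the paper gets it for free from $\rho_2=0$ \cite[Proposition 2.4]{apv16}; also note that $h^{1,2}=0$ does not force $\rho_{1,0}=0$ when $s=1$, since the coefficient of $\rho_{1,0}$ in $h^{1,2}$ is $\binom{s}{2}$ --- you should instead invoke $\rho_1=0$.)

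There is, however, a genuine gap in the sub-case $(|J_\ell|,|K_\ell|)=(1,1)$ with $k_\ell\neq l_\ell$. Writing $R_\ell:\ \sigma_j\sigma_{s+k}\bar\sigma_{s+l}\equiv 1$ and $R_m:\ \sigma_B\sigma_{s+l}\bar\sigma_{s+l_m}\equiv 1$, the conjugate of $R_\ell$ gives $\sigma_{s+l}=(\sigma_j\bar\sigma_{s+k})^{-1}$ on $U$, and substituting this into $R_m$ yields $\sigma_B\bar\sigma_{s+l_m}\sigma_j^{-1}\bar\sigma_{s+k}^{-1}\equiv 1$ --- a relation involving \emph{inverses} of embeddings, which is not of the form $\sigma_I\bar\sigma_J\equiv1$ and hence contributes to no $\rho_{p,m}$; so this does not produce the promised $\rho_{1,2}$ relation. (The substitution only works in your $(2,0)$ case because there $\sigma_{s+l}|_U=\bar\sigma_{s+l}|_U$, so no inverse appears.) The correct and immediate fix, which is what the paper does, is to observe that the conjugate $\sigma_j\sigma_{s+l}\bar\sigma_{s+k}\equiv1$ is itself a $(2,1)$-type relation; since $\rho_{2,1}=s$ it must be one of $R_1,\dots,R_s$, and since it contains the real index $j$, disjointness of the $J$-sets forces it to equal $R_\ell$, i.e.\ $k=l$. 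Finally, your $(0,2)$ case is correct in conclusion but the stated mechanism is off: you need not show that some $\sigma_{s+k}|_U$ is real positive --- the conjugate of $\sigma_{s+k_1}\sigma_{s+k_2}\bar\sigma_{s+l}\equiv1$ is already a $\rho_{1,2}$ relation, which is the paper's one-line argument (combined with a pigeonhole count that reduces the $(2,0)$ case to this one).
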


\begin{proof}
The implication $(1) \Rightarrow (2)$ is clear by Theorem \ref{cohomology} and the fact that $H_{\overline{\partial}}^{2,1}(X)$ is generated by $\{[d w_i \wedge dz_i \wedge d\overline{z}_i]\}_{i}$ and $H_{\overline{\partial}}^{4,2}(X)$ by $\{[d w_i \wedge d w_j \wedge dz_i \wedge dz_j \wedge d\overline{z}_i \wedge d\overline{z}_j]\}_{ij}$, see Remark \ref{rem:inv-pluriclosed}.

We prove now $(2) \Rightarrow (1)$. By \eqref{hodge} and $h_{\overline\partial}^{2, 1}=s$, we have that there are precisely $s$ non-trivial multiplicative relations between the embeddings:
$$
\begin{gathered}
\sigma_{i_{11}} (u) \cdot \sigma_{i_{12}} (u) \cdot \sigma_{i_{13}} (u)=1, \forall u \in U,\\
\ldots\\
\sigma_{i_{s1}} (u) \cdot \sigma_{i_{s2}} (u) \cdot \sigma_{i_{s3}} (u)=1, \forall u \in U,
\end{gathered}
$$
where $i_{l1} < i_{l2} < i_{l3}$, for any $1 \leq l \leq s$. This further implies that $H_{\overline{\partial}}^{2, 1}(X)$ is generated by $\{[dz_{i_{l1}} \wedge dz_{i_{l2}} \wedge d\overline{z}_{i_{l3}}]\}_{1 \leq l \leq s}$. Take now 
$$\bigwedge: H_{\overline{\partial}}^{2, 1}(X) \times H_{\overline{\partial}}^{2, 1}(X) \rightarrow H_{\overline{\partial}}^{4, 2}(X)$$
given by $[\alpha] \wedge [\beta] \longmapsto [\alpha \wedge \beta]$. If for some $k \neq l$ we had $\{i_{k1}, i_{k2}, i_{k3}\} \cap \{i_{l1}, i_{l2}, i_{l3}\} \neq \emptyset$, then $[dz_{i_{k1}} \wedge dz_{i_{k2}} \wedge dz_{i_{k3}}] \wedge [dz_{i_{l1}} \wedge dz_{i_{l2}} \wedge dz_{i_{l3}}]=0$ and therefore, $\mathrm{dim}_{\mathbb{C}} \Im \bigwedge < {s \choose 2}$. However, $\bigwedge$ is surjective, hence $\{i_{k1}, i_{k2}, i_{k3}\} \cap \{i_{l1}, i_{l2}, i_{l3}\} = \emptyset$ for different $k$ and $l$ and 
$$\bigcup_{1 \leq k \leq s} \{i_{k1}, i_{k2}, i_{k3}\}=\{1, \ldots, s+2t\}.$$
Moreover, each of the $s$ multiplicative relations contains exactly one real embedding ({\itshape i.e.} $i_{k1} \leq s$, for all $1 \leq k \leq s$). Indeed, if this was not true, then a certain multiplicative relation $\sigma_{i_{k1}} \cdot \sigma_{i_{k2}} \cdot \sigma_{i_{k3}}|_{U}\equiv 1$, would have $s< i_{k1}< i_{k2}$. Since in this case the conjugate relation $\overline{\sigma}_{i_{k1}} \cdot \overline{\sigma}_{i_{k2}} \cdot \overline{\sigma}_{i_{k3}}|_{U}\equiv 1$ would produce a non-zero element in $H_{\overline{\partial}}^{1, 2}(X)$, which is impossible, we get our conclusion. Now, since $i_{k1} \leq s$, for every $1 \leq k \leq s$, we get that necessarily $\sigma_{i_{k2}}$ and $\sigma_{i_{k3}}$ are conjugated to each other, otherwise $\overline{\sigma}_{i_{k1}} \cdot \overline{\sigma}_{i_{k2}} \cdot \overline{\sigma}_{i_{k3}}|_{U}\equiv 1$ would be a different relation, still involving the term $\sigma_{i_{k1}}$, which is again impossible. Therefore the $s$ multiplicative relations are exactly the numerical condition described in \cite[Theorem 3.2]{o20}, see Corollary \ref{rem:dubickas}, which is equivalent to the existence of a pluriclosed metric. 
\end{proof}

\end{document}